\numberwithin{equation}{section}
  \theoremstyle{remark}
  \newtheorem{rem}{\protect\remarkname}[section]
  \theoremstyle{definition}
  \newtheorem{defn}{\protect\definitionname}[section]
  \theoremstyle{plain}
  \newtheorem{thm}{\protect\theoremname}[section]
  \theoremstyle{plain}
  \newtheorem{lem}{\protect\lemmaname}[section]
  \theoremstyle{plain}
  \newtheorem{prop}{\protect\propositionname}[section]
  \providecommand{\definitionname}{Definition}
  \providecommand{\lemmaname}{Lemma}
  \providecommand{\propositionname}{Proposition}
  \providecommand{\remarkname}{Remark}
\providecommand{\theoremname}{Theorem}
\begin{document}

\title{On an Inversion Theorem for Stratonovich's Signatures of Multidimensional
Diffusion Paths}

\author{X. Geng%
\thanks{Mathematical Institute, University of Oxford, Oxford OX2 6GG, England
and the Oxford-Man Institute, University of Oxford, Oxford OX2 6ED,
England. \protect \\
Email: xi.geng@maths.ox.ac.uk. %
}\  and Z. Qian%
\thanks{Exeter College, University of Oxford, Oxford OX1 3DP, England. \protect \\
Email: qianz@maths.ox.ac.uk.%
}}
\date{}
\maketitle
\begin{abstract}
In the present paper, we prove that with probability one, the Stratonovich
sigatures of a multidimensional diffusion process (possibly degenerate)
over $[0,1],$ which is the collection of all iterated Stratonovich's
integrals of the diffusion process over $[0,1],$ determine the diffusion
sample paths. 
\end{abstract}

\smallskip
\noindent \textit{MSC:} 60J60; 60G17; 60J45

\smallskip
\noindent \textit{Keywords:} Hypoelliptic diffusions; Rough paths; Stratonovich's signatures
\section{Introduction }

Let $X_{t}$ be an $\mathbb{R}^{d}$-valued continuous path over $[0,1]$
with bounded variation ($d\geqslant2$). According to \cite{lyons2007differential},
\cite{lyons2003system}, for $0\leqslant s<t\leqslant1,$ we can define
the sequence of iterated integrals 
\[
\mathbf{X}_{s,t}=(1,X_{s,t}^{1},X_{s,t}^{2},\cdots,X_{s,t}^{n},\cdots),
\]
where

\begin{equation}
X_{s,t}^{n}=\int_{s<u_{1}<\cdots<u_{n}<t}dX_{u_{1}}\otimes\cdots\otimes dX_{u_{n}},\ \ \ n\geqslant1.\label{iterated integrals}
\end{equation}
$X_{s,t}^{n}$ is regarded as an element in the tensor space $(\mathbb{R}^{d})^{\otimes n}\cong\mathbb{R}^{nd}$
and $\mathbf{X}_{s,t}$ is hence an element in the tensor algebra
\[
T^{(\infty)}(\mathbb{R}^{d})=\oplus_{n=0}^{\infty}\mathbb{R}^{nd}.
\]
$\mathbf{X}_{s,t}$ is multiplicative in the sense that it satisfies
the following Chen's identity:
\[
\mathbf{X}_{s,t}=\mathbf{X}_{s,u}\otimes\mathbf{X}_{u,t},\ \ \ 0\leqslant s<u<t\leqslant1.
\]
$\mathbf{X}_{s,t}$ is uniquely determined by the original path $X_{t};$
or intuitively speaking, the original path $X_{t}$ contains all information
about its differential $dX_{t}$. A remarkable consequence is that
a theory of integration along $X_{t}$ can be established in the sense
of Riemann\textendash{}Stieltjes, which leads to a theory of differential
equations driven by $X_{t}.$ Such a theory for paths with bounded
variation is classical and well-studied. 

If the path $X_{t}$ is less regular, for example, $X_{t}$ has finite
$p$-variation for some $p>1,$ it may not be possible to establish
an integration theory along $X_{t}$ by using the information of the
original path only. The fundamental reason is that the path $X_{t}$
itself does not reveal enough information on its differential $dX_{t}$
, which is essential to be fully understood if we want to develop
an integration theory along $X_{t}.$ As pointed out by T. Lyons in
\cite{lyons1998differential}, for this purpose, together with the
path itself, a finite sequence of iterated integrals up to level $[p]$
satisfying Chen's identity should be specified in advance. Such a
finite sequence of iterated integrals
\[
\mathbf{X}_{s,t}=(1,X_{s,t}^{1},\cdots,X_{s,t}^{[p]})
\]
is regarded as a multiplicative functional $\mathbf{X}$ from the
simplex $\Delta=\{(s,t):\ 0\leqslant s\leqslant t\leqslant1\}$ to
the truncated tensor algebra 
\[
T^{([p])}(\mathbb{R}^{d})=\oplus_{n=0}^{[p]}\mathbb{R}^{nd}.
\]
$\mathbf{X}$ is called a rough path with roughness $p.$ According
to \cite{lyons1998differential}, $\mathbf{X}$ extends uniquely to
a multiplicative functional from $\Delta$ to $T^{(\infty)}(\mathbb{R}^{d}).$
In the founding work of T. Lyons in \cite{lyons1998differential},
a general theory of integration and differential equations for rough
paths was established. 

For a rough path $\mathbf{X}$ with roughness $p,$ the signature
of $\mathbf{X}$ is defined as the formal sequence 
\[
S(\mathbf{X})=\mathbf{X}_{0,1}=(1,X_{0,1}^{1},\cdots,X_{0,1}^{[p]},\cdots),
\]
where for $n>[p],$ $X_{s,t}^{n}$ is the unique extension of $\mathbf{X}$
as mentioned before. The signature $S(\mathbf{X})$, proposed by K.T.
Chen in \cite{chen1977iterated} and T. Lyons in \cite{lyons1998differential},
can be regarded as the collection of overall information of any arbitrary
level $n$ about the rough path $\mathbf{X}.$ It is of central interest
and conjectured in the theory of rough paths that the signature $S(\mathbf{X})$
contains sufficient information to recover the path $\mathbf{X}$
completely. In the groundbreaking paper \cite{hambly2010uniqueness}
by B. Hambly and T. Lyons, they proved that for a path $X_{t}$ with
bounded variation, the signature of $X_{t}$ uniquely determines the
path up to a tree-like equivalence. However, for paths with unbounded
variation, very few results are available and it remains a lot of
work to do. 

In the work \cite{lejan2012stratonovich} by Y. Le Jan and Z. Qian,
they considered the case of multidimensional Brownian motion and proved
that for almost surely, the Brownian paths can be recovered by using
the so-called Stratonovich's signature, which is defined via iterated
Stratonovich's integrals of arbitrary orders. Since the Brownian paths
are of unbounded variation and can be regarded as rough paths with
roughness $p\in(2,3)$, we may need to specify the second level in
order to make sense in terms of rough paths. However, according to
\cite{ikeda1989stochastic}, \cite{wong1966relationship}, there is
a canonical lifting of the Brownian paths to the second level by using
dyadic approximations, which is called the L$\acute{\mbox{e}}$vy's
stochastic area process and it coincides exactly with the iterated
Stratonovich's integral defined in the same way as (\ref{iterated integrals}).
Such lifting is determined by the Brownian paths itself, and in \cite{lejan2012stratonovich}
when regarding the Brownian motion as rough paths such lifting was
used by the authors. Therefore, the recovery of Brownian motion as
rough paths is essentially the recovery of the Brownian paths in terms
of Stratonovich's signature.

In the present paper, we are going to generalize the result of Y.
Le Jan and Z. Qian in \cite{lejan2012stratonovich} to the case of
multidimensional diffusion processes (possibly degenerate). The main
idea of the proof is similar to the case of Brownian motion, in which
the authors used a specially designed approximation scheme and chose
special differential 1-forms to define the so-called extended Stratonovich's
signatures to recover the Brownian paths. However, there are several
difficulties in the case of diffusion processes. Firstly, we need
quantitative estimates for rare events of diffusion processes to prove
a convergence result similar to the case of Brownian motion. In \cite{lejan2012stratonovich},
the authors used the symmetry and explicit distribution of Brownian
motion, which are not available in the case of diffusion processes
and hence we need to proceed in a different way. Secondly, to construct
special differential $1$-forms, a quite special case of H$\ddot{\mbox{o}}$rmander's
theorem was used to ensure the existence of density, in which the
so-called H$\ddot{\mbox{o}}$rmander's condition was easily verified.
In the case of diffusion processes, the construction of differential
$1$-forms is more complicated to ensure similar kind of hypoellipticity.
Lastly, in the Brownian motion case, the Laplace operator is well-posed
so that PDE methods could be applied to obtain a crucial estimate
which enables us to relate the extended Stratonovich's signatures
to the Brownian paths. However, for a general diffusion process, the
generator $L$ may not be well-posed any more (we don't impose uniform
ellipticity assumption on $L$) and PDE methods may no longer apply
(in fact, to ensure the application of PDE methods, rather technical
assumptions should be imposed on the differential operator $L$ and
the domain if without uniform ellipticity). Therefore, we need a different
approach to recover the diffusion paths by using extended Stratonovich's
signatures.

\section{Main result and idea of the proof }

In this section, we are going to state our main result and illustrate
the idea of the proof.

Let $(\Omega,\mathcal{F},P)$ be a complete probability space and
let $W_{t}$ be a $d$-dimensional Brownian motion on $\Omega.$ Consider
an $N$-dimensional $(N\geqslant2)$ diffusion process $X_{t}$ defined
by the following SDE (possibly degenerate): 
\begin{equation}
dX_{t}=\sum_{\alpha=1}^{d}V_{\alpha}(X_{t})\circ dW_{t}^{\alpha}+V_{0}(X_{t})dt\label{SDE}
\end{equation}
with $X_{0}=0.$ 

We are going to make the following three assumptions on the generating
vector fields $\{V_{1},\cdots,V_{d};V_{0}\}$.

(A) $V_{0},V_{1},\cdots,V_{d}\in C_{b}^{\infty}(\mathbb{R}^{N}).$

(B) For any $x\in\mathbb{R}^{N},$ H$\ddot{\mbox{o}}$rmander's condition
(see \cite{hormander1967hypoelliptic}) holds at $x$ in the sense
that 
\[
V_{1},\cdots,V_{d},\ [V_{\alpha},V_{\beta}],0\leqslant\alpha,\beta\leqslant d,\ [V_{\alpha},[V_{\beta},V_{\gamma}]],0\leqslant\alpha,\beta,\gamma\leqslant d,\ \cdots
\]
generate the tangent space $T_{x}\mathbb{R}^{N}\cong\mathbb{R}^{N}$,
where $[\cdot,\cdot]$ denotes the Lie bracket. 

(C) There exists a positive orthonormal basis $\{e_{1},\cdots,e_{N}\}$
of $\mathbb{R}^{N},$ such that for any $x\in\mathbb{R}^{N}$ and
$i=1,2,\cdots,N,$ $V_{\alpha}(x)$ is not perpendicular to $e_{i}$
for some $\alpha=1,2,\cdots,d.$

\begin{rem}
Assumptions (A) and (B) are made to ensure the hypoellipticity of
the generator 
\[
L=\frac{1}{2}\sum_{\alpha=1}^{d}V_{\alpha}^{2}+V_{0}
\]
of the diffusion process (\ref{SDE}). Assumption (C) is made to ensure
the escape condition and the non-tangential condition proposed in
\cite{ben1984poisson} hold on some domain of interest which is relatively
small. Under these assumptions, we are able to apply results in \cite{ben1984poisson}
to obtain the existence of a continuous density function of the Poisson
kernel for some domain of interest and a quantitative estimate on
the density function, which are both crucial in the proof of our main
result. 

It should be pointed out that if the diffusion process (\ref{SDE})
is nondegenerate, that is, if $\{V_{1}(x),\cdots,V_{d}(x)\}$ generate
the tangent space $T_{x}\mathbb{R}^{N}\cong\mathbb{R}^{N}$ at each
point $x\in\mathbb{R}^{N},$ then Assumptions (A), (B), (C) are all
verified. 
\end{rem}

For $n\geqslant1,$ $j_{1},\cdots,j_{n}\in\{1,2,\cdots,N\},$ define
the iterated Stratonovich's integral of order $n$: 
\[
[j_{1},\cdots,j_{n}]_{s,t}=\int_{s<t_{1}<\cdots<t_{n}<t}\circ dX_{t_{1}}^{j_{1}}\circ dX_{t_{2}}^{j_{2}}\circ\cdots\circ dX_{t_{n}}^{j_{n}},\ \ \ 0\leqslant s<t\leqslant1.
\]
Alternatively, $[j_{1},\cdots,j_{n}]_{s,t}$ can be defined inductively
by the following relation:
\[
[j_{1},\cdots,j_{n}]_{s,t}=\int_{s<u<t}[j_{1},\cdots,j_{n-1}]_{s,u}\circ dX_{u}^{j_{n}},\ \ \ 0\leqslant s<t\leqslant1,
\]
where $[j_{1}]_{s,t}$ is defined to be 
\[
[j_{1}]_{s,t}=\int_{s<u<t}\circ dX_{u}^{j_{1}}=X_{t}^{j_{1}}-X_{s}^{j_{1}},\ \ \ 0\leqslant s<t\leqslant1.
\]
For convenience, if $n=0,$ we denote $[j_{1},\cdots,j_{n}]_{s,t}=1.$
The family 
\[
\{[j_{1},\cdots,j_{n}]_{0,1}:\ j_{1},\cdots,j_{n}\in\{1,2,\cdots,N\},\ n\geqslant0\}
\]
of iterated Stratonovich's integrals is called the \textit{Stratonovich
signature} of $X_{t}$ over $[0,1].$

Let $\mathcal{F}_{1}$ be the completion of the $\sigma$-algebra
generated by the diffusion process $X_{t}$ over $[0,1],$ and let
$\mathcal{G}_{1}$ be the completion of the $\sigma$-algebra generated
by the Stratonovich's signature of $X_{t}$ over $[0,1].$ More precisely,
\begin{eqnarray*}
\mathcal{F}_{1} & = & \overline{\sigma(X_{t}:\ 0\leqslant t\leqslant1)},\\
\mathcal{G}_{1} & = & \overline{\sigma(\{[j_{1},\cdots,j_{n}]_{0,1}:\ j_{1},\cdots,j_{n}\in\{1,2,\cdots,N\},n\geqslant0\})}.
\end{eqnarray*}
For the case of Brownian motion, it was proved by Y. Le Jan and Z.
Qian in \cite{lejan2012stratonovich} that 
\[
\mathcal{F}_{1}=\mathcal{G}_{1}.
\]
Such result for diffusion processes in our setting can be proved in
the present paper. However, we are going to formulate the problem
in a more illustrative way, which to some extend reveals how we can
reconstruct the diffusion paths from the Stratonovich's signature
over $[0,1]$ in a conceivable way.

First we need the following definition.
\begin{defn}
A \textit{piecewise linear trajectory} (P.L.T.) $\mathcal{T}$ in
$\mathbb{R}^{N}$ is a finite sequence of points in $\mathbb{R}^{N}$
(not necessarily all distinct). Here we always assume that the number
of points in $\mathcal{T}$ is greater than one (if $\mathcal{T}$
consists of only one point $x,$ we will regard $\mathcal{T}$as the
finite sequence $(x,x)$). For a P.L.T. $\mathcal{T}$ in $\mathbb{R}^{N}$,
the number of points in $\mathcal{T}$will be denoted by $|\mathcal{T}|.$
If the points of $\mathcal{T}$ belongs to a subset $\Gamma\subset\mathbb{R}^{N},$
we say that $\mathcal{T}$ is a P.L.T. in $\Gamma.$
\end{defn}

The reason why we use the notion ''piecewise linear trajectory'' is
that when given $\mathcal{T},$ we actually think of $\mathcal{T}$
as a piecewise linear graph by connecting the points in $\mathcal{T}$
by line segments in order. Here we should point out that the order
of points in $\mathcal{T}$ is rather important, and no parametrizations
are involved.
\begin{defn}
For $n\geqslant2,$ a \textit{parametrization} $\sigma$ of order $n$ is
a partition of the time interval $[0,1]$ into $n-1$ nontrivial subintervals:
\[
\sigma:\ 0=t_{1}<t_{2}<\cdots<t_{n-1}<t_{n}=1.
\]
The space of all parametrizations of order $n$ will be denoted by
$\mathcal{P}_{n}.$ 

Let $\mathcal{T}$ be a P.L.T. in $\mathbb{R}^{N}$ and let $\sigma$
be a parametrization of order $|\mathcal{T}|.$ The piecewise linear
path over $[0,1]$ defined by applying linear interpolation of $\mathcal{T}$
along the parametrization $\sigma$ is denoted by $\mathcal{T}(t|\sigma).$
\end{defn}

Our formulation of the problem is related to a kind of convergence
which is parametrization free. Therefore, we need the following definition
of convergence in trajectory.
\begin{defn}
Let $(\gamma_{t})_{0\leqslant t\leqslant1}$ be a continuous path
in $\mathbb{R}^{N}.$ A sequence $\{\mathcal{T}^{(n)}\}$ of P.L.T.s
is said to be \textit{converging in trajectory} to $(\gamma_{t})_{0\leqslant t\leqslant1}$
if 
\[
\lim_{n\rightarrow\infty}\inf_{\sigma\in\mathcal{P}_{|\mathcal{T}^{(n)}|}}\sup_{0\leqslant t\leqslant1}|\gamma_{t}-\mathcal{T}^{(n)}(t|\sigma)|=0.
\]

\end{defn}

\begin{rem}
Such kind of convergence modulo parametrization is similar to the
notion of Fr$\acute{\mbox{e}}$chet distance, which was originally
introduced by M. Fr$\acute{\mbox{e}}$chet in the study of shapes
of geometric spaces.
\end{rem}
Now we are in a position to state our main result.
\begin{thm}
\label{thm 2.1}Let $\mathcal{Z}$ be the space of P.L.T.s in $\mathbb{Z}^{N}$
equipped with the discrete $\sigma$-algebra. Then there exists a
sequence $\{\mathcal{T}^{(n)}\}$ of $\mathcal{Z}$-valued $\mathcal{G}_{1}$-measurable
random variables (random P.L.T.s), such that with probability one,
$\frac{1}{n}\cdot\mathcal{T}^{(n)}$ converges in trajectory to the
diffusion paths $(X_{t})_{0\leqslant t\leqslant1}.$
\end{thm}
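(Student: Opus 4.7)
The plan is to build, for each $n$, a P.L.T.\ $\mathcal{T}^{(n)}$ in $\mathbb{Z}^{N}$ that records the successive cells of the scaled lattice $\tfrac{1}{n}\mathbb{Z}^{N}$ entered by the diffusion path, in such a way that the entire sequence of visited cells is reconstructable from the Stratonovich signature alone. The mechanism converting the algebraic object $\mathcal{G}_{1}$ back into path functionals is the ``extended signature'': since the family of iterated Stratonovich integrals against monomial integrands in $X$ is a polynomial in the ordinary signature, any smooth one-form $\omega$ on the bounded set where $X|_{[0,1]}$ lives can be integrated along $X$ by uniform polynomial approximation, and the resulting line integrals (and their iterated versions) are $\mathcal{G}_{1}$-measurable. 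This is exactly the passage used by Le~Jan and Qian in the Brownian case, and it carries over to the present setting as long as one verifies that $X$ is almost surely bounded on $[0,1]$, which follows from Assumption~(A).

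First, for fixed $n$, I would partition $[0,1]$ by the successive first-exit times of $X$ from the open cubes $C_{k}=k/n+(-1/n,1/n)^{N}$, $k\in\mathbb{Z}^{N}$, and define $\mathcal{T}^{(n)}$ to be the resulting finite sequence of lattice labels $k$. The number of cells visited is almost surely finite by the continuity of $X$. The heart of the proof is then that each event ``$X$ first exits $C_{k}$ through face $F_{j}$ during the subinterval $(s,t)$'' can be detected from the signature by integrating a carefully chosen one-form against $X|_{[s,t]}$, so that $\mathcal{T}^{(n)}$ is itself $\mathcal{G}_{1}$-measurable. The one-forms are the hypoelliptic analogue of those used in \cite{lejan2012stratonovich}: one fixes a thin neighborhood of a chosen face, invokes Assumptions~(A)--(C) and the result of Ben~Arous to produce a continuous Poisson-kernel density for the exit measure from a suitable subdomain, and builds a compactly supported smooth one-form $\omega$ whose line integral along $X$ equals the indicator of the exit event up to an error that vanishes as the support thickness tends to zero.

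Second, one has to detect the order in which faces are crossed, not only the fact that they are crossed. I would handle this by iterating the construction: integrating products of indicator-type forms against $X$ over the simplex yields the ordered list of crossing events, again as a $\mathcal{G}_{1}$-measurable quantity by the extended signature principle. Assumption~(C) is decisive here, as it guarantees the non-tangential and escape conditions of \cite{ben1984poisson} on every coordinate face of every cube, so the crossings are non-degenerate on a set of full probability.

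The hardest step will be the probabilistic control. I need to show that on an event of full probability, for every $n$ large enough (i) the cube-exit events can be unambiguously detected, with the polynomial approximation errors summable in $n$, and (ii) the piecewise linear graph determined by $\mathcal{T}^{(n)}$ converges in trajectory to $(X_{t})_{0\leqslant t\leqslant1}$. Step~(i) will be a Borel--Cantelli argument driven by the quantitative density estimate on the Poisson kernel from \cite{ben1984poisson}, which controls the probability that $X$ crosses a face nearly tangentially at the scale $1/n$. Step~(ii) is then essentially geometric: within each visited cube of side $2/n$, the lattice label $k$ lies within distance $\sqrt{N}/n$ of $X$, so choosing $\sigma_{n}$ to place the exit times on the correct edges of the piecewise linear interpolation gives $\sup_{t}|X_{t}-\tfrac{1}{n}\mathcal{T}^{(n)}(t\mid\sigma_{n})|=O(1/n)$. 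The delicate point throughout is that all randomness used to define $\mathcal{T}^{(n)}$ must be extracted from the signature and not from the path, so every quantity built during the argument must be checked to be $\mathcal{G}_{1}$-measurable.
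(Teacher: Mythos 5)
Your proposal captures the right high-level strategy (record successive visits of lattice cells and extract the visitation order from extended Stratonovich signatures), but there are several genuine gaps where the mechanism you describe either does not work or omits the essential new idea of the paper. First, the claim that you can build a one-form whose line integral ``equals the indicator of the exit event up to an error'' is not how the signature can be read: the extended iterated integrals $[\phi_{z_0},\ldots,\phi_{z_m}]_{0,1}$ are not indicators and cannot be localized to an unknown subinterval $(s,t)$ using $\mathcal{G}_1$-data alone. The paper's actual mechanism is that each $\phi_z^\varepsilon$ is built (via Proposition 3.2) so that the $(N+1)$-dimensional extended diffusion is hypoelliptic on $\mathrm{supp}\,\phi_z^\varepsilon\times\mathbb{R}$; this forces the Stratonovich integral over any visit of the support to be a.s.~nonzero, and the detector is then the \emph{maximal admissible word} $(z_0,\ldots,z_m)$ for which $[\phi_{z_0}^\varepsilon,\ldots,\phi_{z_m}^\varepsilon]_{0,1}\neq 0$ — a nonvanishing criterion, not an evaluation of indicators. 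Your proposal has no analogue of this hypoellipticity lemma, which is the technical heart of Step 2.

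Second, your geometric claim $\sup_t|X_t - \tfrac{1}{n}\mathcal{T}^{(n)}(t\mid\sigma_n)|=O(1/n)$ ``within each visited cube'' is false without probabilistic control: the process may travel arbitrarily far through the inter-box tunnels before registering a new visit, and it is precisely this that the paper controls in Proposition 3.1 (via Lemma 3.1 and the Ben Arous--Kusuoka--Stroock Poisson kernel estimate, which bound the probability of long tunnel excursions). Related to this, your construction uses overlapping cubes $C_k$, which lose the disjoint box/narrow-tunnel structure the paper relies on to make that estimate work. Third, and most importantly, you omit entirely the small-box/large-box sandwich $(\mathcal{X}^\varepsilon)^- \prec \varepsilon\cdot\mathcal{Y}^\varepsilon \prec (\widehat{\mathcal{X}}^\varepsilon)^-$ and the squeeze theorem (Proposition 3.3) for convergence in trajectory. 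This is the genuinely new ingredient of this paper compared to the Brownian case: for a general hypoelliptic generator there is no well-posed PDE representation linking the extended signature directly to the path, so the signature-read P.L.T.~$\mathcal{Y}^\varepsilon$ is only known to lie \emph{between} two convergent piecewise linear approximations, and a Fr\'echet-type squeeze argument is needed to conclude. Without it, your argument would only establish that $\mathcal{T}^{(n)}$ contains at least the boxes visited, not that it converges to the path in trajectory.
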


It seems that the statement of Theorem \ref{thm 2.1} does not contain
much information about the approximating sequence $\{\mathcal{T}^{(n)}\}.$
However, when from the proof in the next section, we will see that
$\mathcal{T}^{(n)}$ is constructed in a quite explicit way.

It should be pointed out that the result of Theorem \ref{thm 2.1}
was already implicitly proved in \cite{lejan2012stratonovich} for
the case of Brownian motion.

A direct consequence of Theorem \ref{thm 2.1} is the result based
on Y. Le Jan and Z. Qian's formulation.
\begin{thm}
$\mathcal{F}_{1}=\mathcal{G}_{1}.$
\end{thm}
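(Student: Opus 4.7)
The plan is to establish the two inclusions separately. For the easy direction $\mathcal{G}_{1}\subseteq\mathcal{F}_{1}$ I would induct on the order $n$ of $[j_{1},\ldots,j_{n}]_{0,1}$: at each step the It\^{o}--Stratonovich conversion writes the Stratonovich integral as an It\^{o} integral plus a quadratic-covariation correction, and both admit pathwise representations as almost-sure limits of Riemann sums of $X$ along dyadic partitions of $[0,1]$. Hence every iterated Stratonovich integral is $\mathcal{F}_{1}$-measurable, which yields $\mathcal{G}_{1}\subseteq\mathcal{F}_{1}$.

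For the main direction $\mathcal{F}_{1}\subseteq\mathcal{G}_{1}$, I would apply Theorem \ref{thm 2.1}. That theorem supplies $\mathcal{G}_{1}$-measurable random P.L.T.s $\mathcal{T}^{(n)}$ with $\frac{1}{n}\mathcal{T}^{(n)}$ converging in trajectory to $X$ almost surely. Since each vertex of $\mathcal{T}^{(n)}$ is a coordinate projection on the countable set $\mathcal{Z}$ and is therefore $\mathcal{G}_{1}$-measurable, the reparametrization-equivalence class $[X]$ of the diffusion path is already $\mathcal{G}_{1}$-measurable. To upgrade this to the measurability of $X_{t}$ at each fixed $t$, I would use the explicit construction of $\mathcal{T}^{(n)}$ alluded to just below the statement of Theorem \ref{thm 2.1}: the vertices are naturally indexed by a dyadic time grid, so the $k$-th vertex of $\mathcal{T}^{(n)}$ divided by $n$ approximates $X_{t_{k}^{(n)}}$ for predetermined dyadic rationals $t_{k}^{(n)}$. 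Each $X_{t}$ at such a $t$ is then an almost-sure limit of $\mathcal{G}_{1}$-measurable random variables, and the pathwise continuity of $t\mapsto X_{t}$ extends this to every $t\in[0,1]$, producing $\mathcal{F}_{1}\subseteq\mathcal{G}_{1}$.

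The subtle point, and the place where care is needed, is that Theorem \ref{thm 2.1} as formulated delivers convergence only modulo reparametrization, so on its face one merely recovers $[X]$; recovering $X_{t}$ at specific times forces one to open up the proof of Theorem \ref{thm 2.1} to verify that under the natural dyadic labelling of vertices the convergence is actually uniform, not just Fr\'{e}chet-style. A conceptually cleaner alternative that avoids such inspection is to reconstruct the time parametrization from $[X]$ using the diffusion's quadratic-variation identity $[X^{i},X^{j}]_{t}=\int_{0}^{t}\sum_{\alpha=1}^{d}V_{\alpha}^{i}(X_{s})V_{\alpha}^{j}(X_{s})\,ds$: any parametrized representative of $[X]$ has pathwise quadratic variation that determines the time change up to the non-degeneracy of $VV^{\top}$, and Assumptions (A)--(C) allow one to push this through in the hypoelliptic regime at the cost of additional work.
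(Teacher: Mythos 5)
Your proposal correctly separates the two inclusions, and you have put your finger on a genuine subtlety that the paper itself never addresses: the paper offers no argument for this theorem at all, merely declaring it a ``direct consequence'' of Theorem \ref{thm 2.1}, yet convergence in trajectory recovers the path only modulo reparametrization. Concretely, if $\gamma'_t=\gamma_{\phi(t)}$ for an increasing bijection $\phi$ of $[0,1]$, a sequence of P.L.T.s converging in trajectory to $\gamma$ also converges in trajectory to $\gamma'$, so the $\mathcal{G}_1$-measurable data $\{\mathcal{T}^{(n)}\}$ determines at most the reparametrization class of $X$, not $X_t$ at a fixed $t$. Some additional mechanism is required to single out the natural clock, and that mechanism must itself be $\mathcal{G}_1$-measurable.

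However, the first mechanism you propose does not work. The vertices of $\mathcal{T}^{(n)}=\mathcal{Y}^{\varepsilon_n}$ are indexed by the order in which the diffusion visits the small boxes, not by any predetermined dyadic time grid. The parametrizations $\sigma_1^{(n)},\sigma_2^{(n)}$ that the squeeze theorem (Proposition \ref{prop 3.3}) uses to obtain uniform convergence are built from the random visit times $\tau_k^{\varepsilon_n}$ and $\zeta_k^{\varepsilon_n}$, which depend on the actual clock and are \emph{not} $\mathcal{G}_1$-measurable. Opening up the proof of Theorem \ref{thm 2.1} therefore yields a uniformly convergent sequence of parametrized piecewise linear paths, but not a $\mathcal{G}_1$-measurable one, so the ``dyadic labelling'' step you invoke has no counterpart in the construction.

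Your second suggestion --- recovering the clock from the quadratic-variation identity $[X^i,X^j]_t=\int_0^t(VV^{T})_{ij}(X_s)\,ds$ --- is the right kind of idea and is close in spirit to what Le Jan and Qian actually do for Brownian motion, where occupation-time and potential-theoretic estimates are used to read off the clock. But in the present hypoelliptic setting $VV^{T}$ may degenerate along the path, so the uniqueness of a reparametrization satisfying the quadratic-variation identity is not automatic and would require its own argument under Assumptions (A)--(C); your sketch only flags this as ``additional work'' and does not supply it. A complete proof of $\mathcal{F}_1\subseteq\mathcal{G}_1$ must either (i) produce $\mathcal{G}_1$-measurable parametrized approximants converging uniformly to $X$, or (ii) show that, almost surely, the reparametrization class of a diffusion sample path has a unique member compatible with the quadratic-variation identity and that this member is $\mathcal{G}_1$-measurable; neither is established in your proposal, nor in the paper.

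Your easy direction $\mathcal{G}_1\subseteq\mathcal{F}_1$ via It\^o--Stratonovich conversion and Riemann-sum approximation along dyadic partitions is fine.
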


Before proving our main result Theorem \ref{thm 2.1} in the next
section, we first illustrate the idea and main steps of the proof.

We adopt the scheme and the key observation that the diffusion paths
can be recovered by reading out the maximal sequence of well-chosen
compactly supported differential $1$-forms such that the iterated
Stratonovich's integral of those $1$-forms (extended Stratonovich's
signature) along the diffusion paths over the duration of visiting
their supports is nonzero , which were proposed in \cite{lejan2012stratonovich}. 

The idea of the proof of Theorem \ref{thm 2.1} is the following. 

Firstly, decompose the Euclidean space $\mathbb{R}^{N}$ into disjoint
small boxes and narrow tunnels. By recording the successive visit
times of those small boxes, we can construct a piecewise linear approximation
of the diffusion paths. A convergence theorem can be proved by developing
certain types of estimates of rare events for the diffusion process.
By enlarging the size of those small boxes a little bit (by a higher
order infinitesimal relative to the size of boxes), we can similarly
get another piecewise linear approximation also converging to the
diffusion paths as the size of boxes goes to zero. Secondly, we construct
a family of ``special'' differential 1-forms on $\mathbb{R}^{N}$
(depending on the size of boxes) in a way that for any larger box,
we construct a $1$-form supported in it such that it is highly non-degenerate
on the inner smaller box. The crucial observation is that the Stratonovich's
integral of any of those $1$-forms along the diffusion paths over
the duration of visit of its support is nonzero. It turns out that
for a diffusion path, we can read out an associated unique maximal
finite sequence of $1$-forms (a P.L.T.) recording a sequence of boxes
in order such that the iterated Stratonovich's integral of this sequence
of $1$-forms (extended Stratonovich's signature) along the diffusion
path over the duration of visiting their supports is nonzero. It provides
us with sufficient information to recover the diffusion path by taking
limit in a reasonable way. This is due to the fact that based on our
construction, we can prove that such a maximal sequence always ``lies''
between the two piecewise linear approximations constructed before,
both of which converge to the diffusion path. Here we need to develop
a kind of squeeze theorem for the type of convergence (convergence
in trajectory in the setting of P.L.T.s defined as before) in our
situation.

To carry out the above idea, we are going to establish the following
three steps.

(1) Step one: proving a convergence result for the piecewise linear
approximation based on successive visit times of small boxes.

The proof consists of two ingredients. The first one is a probabilistic
estimate of the number of boxes visited over the time duration $[0,1],$
which can be developed by using a random time change technique. It
turns out that we can reduce to the Brownian motion case. The importance
of such an estimate is that we can get an asymptotic rate of the probability
that the number of boxes visited over $[0,1]$ is quite large. The
second one is the probabilistic estimate of the uniform distance between
the piecewise linear approximation path and the original diffusion
path, provided the number of boxes visited over $[0,1]$ is fixed.
This can be done by using the Strong Markov property and a quantitative
result in \cite{ben1984poisson} by G. Ben Arous, S. Kusuoka and D.W.
Stroock, which gives us control on the density of the Poisson kernel
of a given bounded domain in $\mathbb{R}^{N}$ and enables us to estimate
the probability that the diffusion process travels through narrow
tunnels. Combining the two ingredients, it is not hard to prove the
convergence result by using the Borel-Cantelli's lemma via a subsequence.

(2) Step two: constructing special differential $1$-forms and using
extended Stratonovich's signatures.

For any larger box, we are going to construct a suitable differential
$1$-form supported in it and highly nondegenerate on the inner smaller
box. The construction of such a differential $1$-form can be reduced
to the construction of a differential $1$-form such that the generator
of some associated SDE with dimension $N+1$ is hypoelliptic on the
support of the differential $1$-form. The family of diffenrential
$1$-forms constructed in such a way will be used to construct extended
Stratonovich's signatures, which in turn will be used to recover the
diffusion paths as stated in the idea of the proof.

(3) Step three: proving a squeeze theorem for convergence in trajectory
to recover the diffusion paths.

From the above two steps we constructed two sequences of piecewise
linear approximations of the diffusion paths, and between which a
sequence of P.L.T.s in terms of extended Stratonovich signatures.
We will formulate the term ``lying between'' in a rigorous way in
the setting of P.L.T.s and prove a squeeze theorem for convergence
in trajectory which fits our situation. Here the squeeze theorem we
are going to prove is not in the most general case (we need to make
use of special parametrizations), so we need to modify the piecewise
linear approximation associated to larger boxes to fit our case.

An advantage of using such a squeeze theorem is that we can get around
the estimates based on potential theory and partial differential equations,
which was used in \cite{lejan2012stratonovich} for the Laplace operator.
In fact, for a general elliptic operator $L,$ the associated partial
differential equation may not be well-posed and the conditions to
ensure a (regular) probabilistic representation of a solution is quite
restrictive and technical.

\section{Proof of the main result}

In this section, we will give the detailed proof of our main result
Theorem \ref{thm 2.1}.

Recall that $\{X_{t}:t\geqslant0\}$ is an $N$-dimensional diffusion
process defined by the following SDE:
\[
dX_{t}=\sum_{\alpha=1}^{d}V_{\alpha}(X_{t})\circ dW_{t}^{\alpha}+V_{0}(X_{t})dt
\]
with $X_{0}=0,$ in which the generating vector fields satisfy Assumptions
(A), (B), (C).

In the following the coordinates of $x\in\mathbb{R}^{N}$ is taken
with respect to the orthonormal basis given in Assumption (C).

\subsection{Discretization and an approximation result}

Similar to the idea of Y. Le Jan and Z. Qian, we first construct a
suitable approximation scheme for the diffusion paths. 

For convenience, a constant is called universal if it depends only
on the generator $L$ and the dimensions $N,d.$ Moreover, sometimes
we may use the same notation to denote universal constants coming
out from estimates, although they may be different from line to line.

Let $0<\varepsilon<1.$ For $z=(z_{1},\cdots,z_{N})\in\mathbb{Z}^{N},$
let $H_{z}^{\varepsilon}$ be the $N$-cube in $\mathbb{R}^{N}$ defined
by 
\[
H_{z}^{\varepsilon}=\{(x_{1},\cdots,x_{N}):\ \varepsilon z_{i}-\frac{\varepsilon-\varepsilon^{\mu}}{2}\leqslant x_{i}\leqslant\varepsilon z_{i}+\frac{\varepsilon-\varepsilon^{\mu}}{2},\ i=1,2,\cdots,N\},
\]
where $\mu$ is some universal constant to be chosen later on. 

For
technical reasons we assume that the boundary of $H_{z}^{\varepsilon}$ is smoothed to the order of $\varepsilon^{2\mu}$. Such a smoothing procedure can be done in a simple geometric way, or by using standard mollifiers. In the case of $N=2,$ we
can simply replace each corner of $H_{z}^{\varepsilon}$ by a quarter
of a circle with radius  $\varepsilon^{2\mu}$. The
space $\mathbb{R}^{N}$ is then divided into disjoint small boxes
and narrow tunnels.

Now we are going to construct an approximation of diffusion paths
$X_{t}$ over the time duration $[0,1].$

Let $\tau_{0}^{\varepsilon}=0$ and $\boldsymbol{m}_{0}^{\varepsilon}=(0,\cdots,0).$
For $k\geqslant1,$ define 
\[
\tau_{k}^{\varepsilon}=\inf\{t>\tau_{k-1}^{\varepsilon}:\ X_{t}\in\bigcup_{z\neq\boldsymbol{m}_{k-1}^{\varepsilon}}H_{z}^{\varepsilon}\}.
\]
If $\tau_{k}^{\varepsilon}<\infty,$ define $ $$\boldsymbol{m}_{k}^{\varepsilon}$
be the integer point in $\mathbb{Z}^{N}$ such that $X_{\tau_{k}^{\varepsilon}}\in H_{\boldsymbol{m}_{k}^{\varepsilon}}^{\varepsilon};$
if $\tau_{k}^{\varepsilon}=\infty,$ define $\boldsymbol{m}_{k}^{\varepsilon}=\boldsymbol{m}_{k-1}^{\varepsilon}.$
Intuitively, the sequence of hitting times $\{\tau_{k}^{\varepsilon}\}_{k=0}^{\infty}$
records the successive visit times of the small boxes and the sequence
of integer points $\{\boldsymbol{m}_{k}^{\varepsilon}\}_{k=0}^{\infty}$
records the boxes visited by the diffusion paths in order (revisit
of the same box before visiting other boxes doesn't count). Note that
it is possible that $P(\tau_{k}^{\varepsilon}=\infty)>0$ since with
positive probability the process can always stay in narrow tunnels
after leaving some box.

Let $M_{H}^{\varepsilon}$ be the number of boxes visited by the diffusion
paths over the time duration $[0,1].$ Formally,
\[
M_{H}^{\varepsilon}=\inf\{k\geqslant0:\ \tau_{k+1}^{\varepsilon}>1\}.
\]
It follows from uniform continuity of the diffusion paths over $[0,1]$
that $M_{H}^{\varepsilon}<\infty$ for almost surely.

By a standard random time change argument, we can prove the following.
\begin{lem}
\label{lem 3.1} Let $C=\max\{\|V_{1}\|_{\infty},\cdots,\|V_{d}\|_{\infty},\|V_{0}+\frac{1}{2}\sum_{\alpha=1}^{d}\nabla_{V_{\alpha}}V_{\alpha}\|_{\infty}\}.$
Then for any $k>\frac{2C}{\varepsilon^{\mu}},$
\[
P(M_{H}^{\varepsilon}=k)\leqslant4Nke^{-\frac{\varepsilon^{2\mu}k}{8NdC^{2}}}.
\]
\end{lem}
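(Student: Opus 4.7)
The plan is a random-time-change argument that reduces the question to a tail bound for the hitting-time sequence of a one-dimensional Brownian motion, carried out one coordinate at a time.

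First, I would rewrite the Stratonovich SDE in It\^o form, so that each coordinate decomposes as $X^{i}_t = M^{i}_t + A^{i}_t$, where $M^{i}_t = \int_0^t\sum_{\alpha} V^{i}_\alpha(X_s)\,dW^\alpha_s$ is a continuous local martingale with $\langle M^{i}\rangle_1\leq dC^{2}$ and $A^{i}_t = \int_0^t\bigl(V_0 + \tfrac12\sum_\alpha \nabla_{V_\alpha}V_\alpha\bigr)^{i}(X_s)\,ds$ is Lipschitz in $t$ with constant $C$. By the Dambis--Dubins--Schwarz theorem, $M^{i}_t = B^{i}_{\langle M^{i}\rangle_t}$ for some standard one-dimensional Brownian motion $B^{i}$, so all the behaviour of $M^{i}$ on $[0,1]$ is encoded by $B^{i}$ restricted to $[0,dC^{2}]$.

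Next, I exploit the discretisation geometry. On $\{M_H^\varepsilon=k\}$ the successive points $X_{\tau^\varepsilon_{j-1}}$ and $X_{\tau^\varepsilon_{j}}$ lie in distinct boxes, so $\|X_{\tau^\varepsilon_j}-X_{\tau^\varepsilon_{j-1}}\|_\infty \geq \varepsilon^\mu$ (up to the $\varepsilon^{2\mu}$-correction from the smoothed corners, which can be absorbed). Pigeonhole on which coordinate witnesses this produces a coordinate $i^{*}$ with at least $\lceil k/N\rceil$ disjoint jumps of size $\geq\varepsilon^\mu$, and a union bound over the $N$ coordinates gives
\[
P(M_H^\varepsilon=k)\leq N\max_{1\leq i\leq N} P(\mathcal A_{k,i}),
\]
where $\mathcal A_{k,i}$ is the event that $X^{i}$ exhibits at least $\lceil k/N\rceil$ such disjoint jumps on $[0,1]$. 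The drift contributes total variation $\leq C$ to each coordinate, and combined with the threshold $k>2C/\varepsilon^\mu$ this forces the martingale $M^{i}$ (equivalently, the time-changed Brownian motion $B^{i}$) to carry the bulk of each relevant jump. Hence the problem reduces to estimating the probability that $B^{i}$ makes of order $k/N$ disjoint displacements of size of order $\varepsilon^\mu$ inside $[0,dC^{2}]$.

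Finally, let $\sigma_{0}=0$ and $\sigma_{l}=\inf\{t>\sigma_{l-1}:|B^{i}_t-B^{i}_{\sigma_{l-1}}|\geq\varepsilon^\mu/2\}$. By the strong Markov property, the increments $\sigma_l-\sigma_{l-1}$ are i.i.d.\ with $E\bigl[e^{-\lambda(\sigma_1-\sigma_0)}\bigr]=1/\cosh\bigl((\varepsilon^\mu/2)\sqrt{2\lambda}\bigr)$, so an exponential Markov bound
\[
P(\sigma_m\leq dC^{2})\leq e^{\lambda dC^{2}}\cosh\bigl((\varepsilon^\mu/2)\sqrt{2\lambda}\bigr)^{-m}
\]
with $\lambda$ chosen of order $\varepsilon^{-2\mu}$ produces the exponential rate $\exp\bigl(-\varepsilon^{2\mu}k/(8NdC^{2})\bigr)$ for $m=\lceil k/N\rceil$. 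The prefactor $4Nk$ then absorbs the union-bound factor $N$ together with a linear factor $k$ coming either from the $e^{\lambda dC^{2}}$ term when $\lambda$ is tuned to match the target exponent, or from a crude bookkeeping over the choice of jump indices. I expect the principal difficulty to be purely technical constant-tracking: aligning pigeonhole, drift removal, and Chernoff optimisation so that the specific coefficients $4N$, $k$, and $1/(8NdC^{2})$ appear in the final bound rather than the correct order of magnitude only.
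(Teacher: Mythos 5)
Your reduction by Itô reformulation and Dambis--Dubins--Schwarz is the same first step as the paper's, but from there you take a genuinely different decomposition, and that decomposition has a real gap.

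The paper pigeonholes on \emph{time increments}: on $\{M_H^\varepsilon=k\}$ one has $\tau_k^\varepsilon\leqslant 1$, and since $\sum_{l=1}^{k}(\tau^\varepsilon_l-\tau^\varepsilon_{l-1})\leqslant 1$, at least one increment satisfies $\tau^\varepsilon_l-\tau^\varepsilon_{l-1}\leqslant 1/k$. A union bound over which $l$ that is gives the prefactor $k$, and after the strong Markov property one only has to bound $P(\sup_{0\leqslant t\leqslant 1/k}|X_t-x|\geqslant\varepsilon^\mu)$. The crucial point is that over a window of length $1/k$ the drift contributes at most $C/k<\varepsilon^\mu/2$ (this is exactly where the threshold $k>2C/\varepsilon^\mu$ is used), so the martingale part alone must overshoot $\varepsilon^\mu/2$. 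A coordinatewise union bound (factor $N$), DDS, and the Gaussian tail estimate $\int_x^\infty \frac{1}{\sqrt{2\pi}}e^{-t^2/2}dt\leqslant e^{-x^2/2}$ then produce $4Ne^{-\varepsilon^{2\mu}k/(8NdC^2)}$ per term, and hence $4Nk\,e^{-\varepsilon^{2\mu}k/(8NdC^2)}$ overall.

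You instead pigeonhole on \emph{coordinates}: some coordinate $i^*$ witnesses at least $\lceil k/N\rceil$ of the jumps $|X_{\tau^\varepsilon_j}-X_{\tau^\varepsilon_{j-1}}|\geqslant\varepsilon^\mu$, and you propose to run a Chernoff bound for the hitting-time sequence of the time-changed Brownian motion $B^{i^*}$. The gap is in the sentence ``the drift contributes total variation $\leqslant C$ to each coordinate, and combined with $k>2C/\varepsilon^\mu$ this forces the martingale to carry the bulk of each relevant jump.'' That does not follow: the drift budget $C$ can be concentrated on a few of the $k$ intervals, and the number of intervals on which it can contribute $\geqslant\varepsilon^\mu/2$ is as large as $2C/\varepsilon^\mu$, which is of the same order as $k$ near the threshold. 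After discarding those intervals and applying pigeonhole over coordinates you are left with of order $(k-2C/\varepsilon^\mu)/N$ ``clean'' jumps, which vanishes as $k\downarrow 2C/\varepsilon^\mu$; you would need a strictly larger threshold (e.g.\ $k>4C/\varepsilon^\mu$) to retain a linear-in-$k$ count of usable jumps. In the paper's scheme this difficulty simply does not arise, because the short time window automatically tames the drift on the one increment it considers.

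Separately, even with the threshold fixed, your hitting-time Chernoff bound $P(\sigma_m\leqslant dC^2)\leqslant e^{\lambda dC^2}\cosh\bigl(\tfrac{\varepsilon^\mu}{2}\sqrt{2\lambda}\bigr)^{-m}$ with $m\asymp k/N$ does not reproduce the stated bound: optimizing in $\lambda$ gives an exponent quadratic in $k$ together with a $2^{m}$-type prefactor, and the two expressions agree with $4Nk\,e^{-\varepsilon^{2\mu}k/(8NdC^2)}$ only once $k\gtrsim \varepsilon^{-2\mu}$, whereas the lemma is claimed for all $k>2C/\varepsilon^\mu$. That region is not used in Proposition \ref{prop 3.1} (there $k=[1/\varepsilon^\gamma]$ with $\gamma\gg 2\mu$), so your route could be salvaged for the paper's application, but it does not prove the lemma as stated, and the prefactor $k$ in the target bound arises naturally from the paper's union bound over which increment is short rather than from any Chernoff bookkeeping.
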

\begin{proof}
For $k\geqslant1,$ it is obvious that 
\begin{eqnarray*}
P(M_{H}^{\varepsilon}=k) & = & P(\tau_{k}^{\varepsilon}\leqslant1,\ \tau_{k+1}^{\varepsilon}>1)\\
 & \leqslant & P(\bigcup_{l=1}^{k}\{\tau_{l}^{\varepsilon}-\tau_{l-1}^{\varepsilon}\leqslant\frac{1}{k},\ \tau_{k}^{\varepsilon}\leqslant1\})\\
 & \leqslant & \sum_{l=1}^{k}P(\tau_{l}^{\varepsilon}-\tau_{l-1}^{\varepsilon}\leqslant\frac{1}{k},\ \tau_{k}^{\varepsilon}\leqslant1)\\
 & \leqslant & \sum_{l=1}^{k}P(\sup_{0\leqslant t\leqslant1/k}|X_{t+\tau_{l-1}^{\varepsilon}}-X_{\tau_{l-1}^{\varepsilon}}|\geqslant\varepsilon^{\mu},\ \tau_{l-1}^{\varepsilon}<\infty),
\end{eqnarray*}
where the last inequality comes from the fact that the distance between
two different boxes is bounded from below by $\varepsilon^{\mu}.$
By the strong Markov property, it suffices to estimate 
\[
P(\sup_{0\leqslant t\leqslant1/k}|X_{t}-x|\geqslant\varepsilon^{\mu}),
\]
 where $X_{t}$ is the diffusion process defined by (\ref{SDE}) starting
at $x\in\mathbb{R}^{N}.$ 

By rewriting (\ref{SDE}) in the sense of It$\hat{\mbox{o}}$, we
have
\[
\begin{cases}
dX_{t}=\sum_{\alpha=1}^{d}V_{\alpha}(X_{t})dW_{t}^{\alpha}+\widetilde{V_{0}}(X_{t})dt,\\
X_{0}=x,
\end{cases}
\]
where $\widetilde{V_{0}}=V_{0}+\frac{1}{2}\sum_{\alpha=1}^{d}\nabla_{V_{\alpha}}V_{\alpha}$.
It follows that for $k>\frac{2C}{\varepsilon^{\mu}}$, we have 
\begin{align*}
 & P(\sup_{0\leqslant t\leqslant1/k}|X_{t}-x|\geqslant\varepsilon^{\mu})\\
= & P(\sup_{0\leqslant t\leqslant1/k}|\sum_{\alpha=1}^{d}\int_{0}^{t}V_{\alpha}(X_{s})dW_{s}^{\alpha}+\int_{0}^{t}\widetilde{V_{0}}(X_{s})ds|\geqslant\varepsilon^{\mu})\\
\leqslant & P(\sup_{0\leqslant t\leqslant1/k}|\sum_{\alpha=1}^{d}\int_{0}^{t}V_{\alpha}(X_{s})dW_{s}^{\alpha}|\geqslant\frac{\varepsilon^{\mu}}{2})\\
\leqslant & \sum_{i=1}^{N}P(\sup_{0\leqslant t\leqslant1/k}|\sum_{\alpha=1}^{d}\int_{0}^{t}V_{\alpha}^{i}(X_{s})dW_{s}^{\alpha}|\geqslant\frac{\varepsilon^{\mu}}{2\sqrt{N}}).
\end{align*}
By using a standard random time change technique and the inequality
\[
\int_{x}^{\infty}\frac{1}{\sqrt{2\pi}}e^{-\frac{1}{2}t^{2}}dt\leqslant e^{-\frac{1}{2}x^{2}},\ \ \ x>0,
\]
it is then easy to obtain that 
\[
P(\sup_{0\leqslant t\leqslant1/k}|X_{t}-x|\geqslant\varepsilon^{\mu})\leqslant4Ne^{-\frac{\varepsilon^{2\mu}k}{8NdC^{2}}}.
\]

Therefore, we have 
\[
P(M_{H}^{\varepsilon}=k)\leqslant4Nke^{-\frac{\varepsilon^{2\mu}k}{8NdC^{2}}},\ \ \ k>\frac{2C}{\varepsilon^{\mu}},
\]
and the proof is complete.
\end{proof}

Now we define polygonal approximations of the diffusion paths through
successive visits of those boxes. More precisely, if $M_{H}^{\varepsilon}=0,$
define $X_{t}^{\varepsilon}\equiv(0,0,\cdots,0)$ on $[0,1];$ otherwise
for $1\leqslant k\leqslant M_{H}^{\varepsilon}$, define
\[
X_{t}^{\varepsilon}=\frac{\tau_{k}^{\varepsilon}-t}{\tau_{k}^{\varepsilon}-\tau_{k-1}^{\varepsilon}}\varepsilon\boldsymbol{m}_{k}^{\varepsilon}+\frac{t-\tau_{k-1}^{\varepsilon}}{\tau_{k}^{\varepsilon}-\tau_{k-1}^{\varepsilon}}\varepsilon\boldsymbol{m}_{k-1}^{\varepsilon},\ \ \ t\in[\tau_{k-1}^{\varepsilon},\tau_{k}^{\varepsilon}],
\]
and on $[\tau_{M_{H}^{\varepsilon}},1],$ define $X_{t}^{\varepsilon}\equiv\varepsilon\boldsymbol{m}_{M_{H}^{\varepsilon}}^{\varepsilon}.$
Figure 1 illustrates the construction.

\begin{figure}
\begin{center}

\includegraphics[scale=0.35]{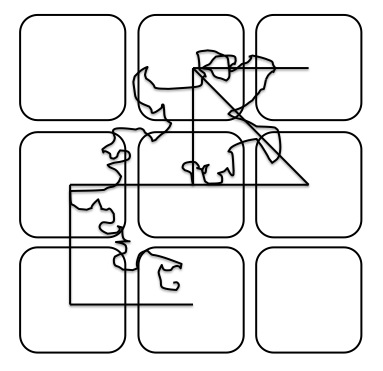}
\caption{This figure illustrates the construction of the polygonal approximation
of the diffusion path. Here the total number of boxes visited by the
path in order is $8$.}

\end{center}
\end{figure}

Now we have the following convergence result. The proof is developed
for arbitrary dimension $N\geqslant2,$ but in the case of $N=2$
the idea is easier to visualize.
\begin{prop}
\label{prop 3.1}There exists a subsequence $\varepsilon_{n}\rightarrow0,$
such that with probability one, $(X_{t}^{\varepsilon_{n}})_{0\leqslant t\leqslant1}$
converges uniformly to the diffusion paths $(X_{t})_{0\leqslant t\leqslant1}$
on $[0,1]$ as $n\rightarrow\infty.$\end{prop}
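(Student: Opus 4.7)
The plan is to obtain a quantitative bound on
\[
P\!\left(\sup_{0 \le t \le 1}|X_t - X_t^\varepsilon| > \delta_\varepsilon\right)
\]
for a well-chosen $\delta_\varepsilon \downarrow 0$, and then extract almost-sure uniform convergence along a geometric subsequence $\varepsilon_n = 2^{-n}$ via Borel--Cantelli. First, I would condition on the number of boxes visited:
\[
P\!\left(\sup_t |X_t - X_t^\varepsilon| > \delta_\varepsilon\right) \le P(M_H^\varepsilon > K_\varepsilon) + \sum_{k=1}^{K_\varepsilon} P(A_k^\varepsilon),
\]
where $A_k^\varepsilon = \{\sup_{t \in [\tau_{k-1}^\varepsilon, \tau_k^\varepsilon]}|X_t - X_t^\varepsilon| > \delta_\varepsilon,\ M_H^\varepsilon \ge k\}$ and $K_\varepsilon = \lceil \varepsilon^{-2\mu - \eta}\rceil$ for some small $\eta > 0$. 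Lemma \ref{lem 3.1} then makes the first term decay faster than any polynomial in $\varepsilon$.

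For each $A_k^\varepsilon$, observe that on the excursion interval $[\tau_{k-1}^\varepsilon, \tau_k^\varepsilon]$ the approximant $X_t^\varepsilon$ is the straight segment from $\varepsilon \boldsymbol{m}_{k-1}^\varepsilon$ to $\varepsilon \boldsymbol{m}_k^\varepsilon$, so the deviation is bounded by $2 \sup_{t \in [\tau_{k-1}^\varepsilon, \tau_k^\varepsilon]}|X_t - \varepsilon \boldsymbol{m}_{k-1}^\varepsilon|$. By the strong Markov property at $\tau_{k-1}^\varepsilon$ together with the lattice translation invariance of the box decomposition, each $P(A_k^\varepsilon)$ is dominated by the universal quantity
\[
p_\varepsilon := \sup_{z,\, x \in H_z^\varepsilon} P_x\!\left(X\ \text{leaves}\ B(\varepsilon z, \delta_\varepsilon/2)\ \text{before entering any}\ H_{z'}^\varepsilon,\ z' \ne z\right),
\]
so that the sum is at most $K_\varepsilon \cdot p_\varepsilon$.

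The main obstacle is the estimate of $p_\varepsilon$. Starting from $x \in H_z^\varepsilon$, the process must first leave $H_z^\varepsilon$ and can then in principle wind through many narrow tunnels before entering a new box. Here I would invoke the quantitative Poisson-kernel estimate of Ben Arous, Kusuoka and Stroock foreshadowed in Section 2: under Assumptions (A), (B), (C), hypoellipticity combined with the escape and non-tangential conditions yields a continuous density for the exit distribution on a suitable enlarged hypoelliptic domain $D_z^\varepsilon \supset H_z^\varepsilon$, with a universal lower bound on the density restricted to the faces of the neighboring boxes. This forces the diffusion to hit an adjacent box before wandering too far with a definite one-step success probability; iterating the estimate over the finitely many independent attempts needed to reach distance $\delta_\varepsilon$ from $\varepsilon z$, I expect $p_\varepsilon$ to decay faster than any polynomial in $\varepsilon$ once $\delta_\varepsilon = \varepsilon^{1-\rho}$ is chosen with some small $\rho > 0$.

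Combining the two ingredients, both $P(M_H^\varepsilon > K_\varepsilon)$ and $K_\varepsilon \cdot p_\varepsilon$ are super-polynomially small in $\varepsilon$, hence summable along $\varepsilon_n = 2^{-n}$. Borel--Cantelli then produces the desired almost-sure uniform convergence of $X^{\varepsilon_n}$ to $X$ on $[0,1]$; the free parameters $\mu, \eta, \rho$ in the box construction can be tuned to accommodate all the exponent trade-offs.
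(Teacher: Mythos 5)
Your overall scaffold matches the paper's: bound the uniform deviation probability, split by the number $M_H^\varepsilon$ of boxes visited, control the tail of $M_H^\varepsilon$ via Lemma~\ref{lem 3.1}, estimate each excursion via the strong Markov property and the Ben Arous--Kusuoka--Stroock Poisson kernel result, then conclude by Borel--Cantelli along a subsequence. The parameter choices differ cosmetically ($\delta_\varepsilon = \varepsilon^{1-\rho}$ vs.\ the paper's $\lambda\varepsilon$; $K_\varepsilon = \varepsilon^{-2\mu-\eta}$ vs.\ $\varepsilon^{-\gamma}$ with $\gamma\gg 2\mu$), and these would both go through.

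The genuine gap is in how you propose to estimate $p_\varepsilon$. You invoke ``a universal lower bound on the density restricted to the faces of the neighboring boxes'' so that ``the diffusion is forced to hit an adjacent box with a definite one-step success probability,'' and then iterate this lower bound over the roughly $\delta_\varepsilon/\varepsilon$ attempts needed to wander far. But the Ben Arous--Kusuoka--Stroock result cited in the paper supplies an \emph{upper} bound on the Poisson kernel density, of the form $|h(x,\eta)|\leqslant K_0\,\mathrm{dist}(x,\partial G)/\mathrm{dist}(x,\eta)^{\nu_0}$, not a lower bound, and no uniform lower bound on the hitting density at adjacent box faces is stated or derived in the paper. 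Establishing such a lower bound uniformly as $\varepsilon\to0$ in this degenerate (hypoelliptic) setting and in the narrow-tunnel geometry would require a separate positivity argument that your sketch does not provide; without it the iteration you describe does not close.

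The paper's proof sidesteps this entirely by running the estimate in the opposite direction. The bad event on an excursion $[\tau_{j-1}^\varepsilon,\tau_j^\varepsilon]$ forces the diffusion to have traversed on the order of $\lambda/12$ narrow tunnels (width $\varepsilon^\mu$ against box scale $\varepsilon$) without ever entering a new box, and each single tunnel traversal is an escape through a boundary window whose relative surface measure is of order $\varepsilon^{\mu}/\varepsilon$. The upper bound on the Poisson kernel density then gives a per-traversal probability $\leqslant K\varepsilon^{\mu-\nu_0}$, and iterating via the strong Markov property yields $K^{\lambda/12}\varepsilon^{(\lambda/12)(\mu-\nu_0)}$. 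Choosing $\mu\gg\nu_0$ and $\lambda$ large (depending on $\gamma$) makes the whole sum summable. If you replace your lower-bound step by this upper-bound-through-a-narrow-window argument, your proof becomes essentially the paper's.
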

\begin{proof}
We aim at estimating the following probability 
\[
P(\sup_{0\leqslant t\leqslant1}|X_{t}^{\varepsilon}-X_{t}|>\lambda\varepsilon),
\]
where $\lambda$ is a large universal constant to be chosen later
on. For convenience, we will assume that $\frac{\lambda}{12}$ is
a positive integer. 

For this purpose, let $k$ be a large integer to be chosen later on
(may depend on $\varepsilon$). It follows that

\begin{align*}
 & P(\sup_{0\leqslant t\leqslant1}|X_{t}^{\varepsilon}-X_{t}|>\lambda\varepsilon)\\
\leqslant & \sum_{l=0}^{k}P(\sup_{0\leqslant t\leqslant1}|X_{t}^{\varepsilon}-X_{t}|>\lambda\varepsilon,M_{H}^{\varepsilon}=l)+P(M_{H}^{\varepsilon}>k)\\
\leqslant & \sum_{l=0}^{k}P(\bigcup_{j=1}^{l}\{\sup_{\tau_{j-1}^{\varepsilon}\leqslant t\leqslant\tau_{j}^{\varepsilon}}|X_{t}^{\varepsilon}-X_{t}|>\lambda\varepsilon,\ M_{H}^{\varepsilon}=l\}\\
 & \bigcup\{\sup_{\tau_{l}^{\varepsilon}\leqslant t\leqslant1}|X_{t}^{\varepsilon}-X_{t}|>\lambda\varepsilon,\ M_{H}^{\varepsilon}=l\})+P(M_{H}^{\varepsilon}>k)
\end{align*}
\begin{align*}
\leqslant & \sum_{l=0}^{k}[\sum_{j=1}^{l}P(\sup_{\tau_{j-1}^{\varepsilon}\leqslant t\leqslant\tau_{j}^{\varepsilon}}|X_{t}^{\varepsilon}-X_{t}|>\lambda\varepsilon,\ \tau_{j}^{\varepsilon}\leqslant1)\\
 & +P(\sup_{\tau_{l}^{\varepsilon}\leqslant t\leqslant1}|X_{t}^{\varepsilon}-X_{t}|>\lambda\varepsilon,\ M_{H}^{\varepsilon}=l)]+P(M_{H}^{\varepsilon}>k).
\end{align*}

We first estimate $P(\sup_{\tau_{j-1}^{\varepsilon}\leqslant t\leqslant\tau_{j}^{\varepsilon}}|X_{t}^{\varepsilon}-X_{t}|>\lambda\varepsilon,\ \tau_{j}^{\varepsilon}\leqslant1).$
The idea is the following: the event $\{\sup_{\tau_{j-1}^{\varepsilon}\leqslant t\leqslant\tau_{j}^{\varepsilon}}|X_{t}^{\varepsilon}-X_{t}|>\lambda\varepsilon,\ \tau_{j}^{\varepsilon}\leqslant1\}$
implies that after time $\tau_{j-1}^{\varepsilon},$ the process must
have travelled through many narrow tunnels and spread far away from
$H_{\boldsymbol{m}_{j-1}^{\varepsilon}}^{\varepsilon}$ by many boxes
before visiting another box. Define $\sigma_{0}$ to be the first
time after $\tau_{j-1}^{\varepsilon}$ that the process arrives at
the entrance of some narrow tunnel which is far away from $H_{\boldsymbol{m}_{j-1}^{\varepsilon}}^{\varepsilon}$
with distance at least $\frac{\lambda}{6}\varepsilon$ without hitting
any other boxes. For $ $$1\leqslant L\leqslant\lambda/12,$ define
$\sigma_{L}$ to be the first time after $\sigma_{L-1}$ that the
process travels through a narrow tunnel without hitting any boxes
other than $H_{\boldsymbol{m}_{j-1}^{\varepsilon}}^{\varepsilon}$
(define $\sigma_{0}=\infty$ if there is no such arrival and $\sigma_{L}=\infty$
if there is no such travel through). It is easy to see that 
\[
\{\sup_{\tau_{j-1}^{\varepsilon}\leqslant t\leqslant\tau_{j}^{\varepsilon}}|X_{t}^{\varepsilon}-X_{t}|>\lambda\varepsilon,\ \tau_{j}^{\varepsilon}\leqslant1\}\subset\{\sigma_{0}<\sigma_{1}<\cdots<\sigma_{\lambda/12}<\infty\}.
\]
Thus it suffices to estimate $P(\sigma_{0}<\sigma_{1}<\cdots<\sigma_{\lambda/12}<\infty).$
This can be done by using the strong Markov property and a quantitative
estimate for the Poisson kernel of some nice domain in \cite{ben1984poisson}.
In fact, by the strong Markov property, 
\begin{align*}
 & P(\sigma_{0}<\sigma_{1}<\cdots<\sigma_{\lambda/12}<\infty)\\
= & E[P(\sigma_{0}<\sigma_{1}<\cdots<\sigma_{\lambda/12}<\infty|\mathcal{F}_{\sigma_{\lambda/12-1}}^{X}),\ \sigma_{\lambda/12-1}<\infty]\\
= & E[P^{X_{\sigma_{\lambda/12-1}}(\omega)}(\{\omega':\ \omega'\in T(\omega)\}),\ \sigma_{0}(\omega)<\sigma_{1}(\omega)<\cdots<\sigma_{\lambda/12-1}(\omega)<\infty],
\end{align*}
where $T(\omega)$ denotes the set of sample paths $\omega'$ of the
diffusion process starting at $X_{\sigma_{\lambda/12-1}}(\omega)$
such that the first time of traveling through a narrow tunnel without
hitting any boxes is finite. By the assumptions on the generating
vector fields, the generator $L$ and those small boxes $H_{z}^{\varepsilon}$
verify the conditions of Lemma 2.6 in \cite{ben1984poisson}. It follows
from the lemma that the Poisson kernel $H(x,d\eta)$ of any small
box has a continuous density $h(x,\eta)$ with respect to the normalized
surface measure $d\eta.$ Moreover, there are universal constants
(in particular, not depending on $\varepsilon$) $K_{0},\nu_{0}>0,$
such that 
\[
|h(x,\eta)|\leqslant K_{0}\cdot\mbox{dist\ensuremath{(x,\partial G)/\mbox{dist\ensuremath{(x,\eta)^{\nu_{0}}}}}},
\]
for any $x$ in the box and $\eta$ on the boundary. Since traveling
through narrow tunnels implies escaping through narrow windows of
the boundary of some associated domain, it follows that on $\{\omega:\ \sigma_{\lambda/12-1}(\omega)<\infty\},$
\[
P^{X_{\sigma_{\lambda/12-1}}(\omega)}(\{\omega':\ \omega'\in T(\omega)\})\leqslant K(\varepsilon-\varepsilon^{\mu}-2\varepsilon^{2\mu})^{1-\nu_{0}}\cdot\frac{\varepsilon^{\mu}}{\varepsilon},
\]
for some universal constant $K>0.$ Now it is clear that if we choose
$\mu$ to be universal and far greater than $\nu_{0},$ then on $\{\omega:\ \sigma_{\lambda/12-1}(\omega)<\infty\},$
we have 
\[
P^{X_{\sigma_{\lambda/12-1}}(\omega)}(\{\omega':\ \omega'\in T(\omega)\})\leqslant K\varepsilon^{\mu-\nu_{0}},
\]
for some universal constant $K>0.$ Therefore,
\[
P(\sigma_{0}<\sigma_{1}<\cdots<\sigma_{\lambda/12}<\infty)\leqslant K\varepsilon^{\mu-\nu_{0}}P(\sigma_{0}<\sigma_{1}<\cdots<\sigma_{\lambda/12-1}<\infty).
\]
By induction, it is immediate that
\begin{eqnarray*}
P(\sigma_{0}<\sigma_{1}<\cdots<\sigma_{\lambda/12}<\infty) & \leqslant & K^{\frac{\lambda}{12}}\varepsilon^{\frac{\lambda}{12}(\mu-\nu_{0})}P(\sigma_{0}<\infty)\\
 & \leqslant & K^{\frac{\lambda}{12}}\varepsilon^{\frac{\lambda}{12}(\mu-\nu_{0})}.
\end{eqnarray*}
 Therefore, we arrive at 
\[
P(\sup_{\tau_{j-1}^{\varepsilon}\leqslant t\leqslant\tau_{j}^{\varepsilon}}|X_{t}^{\varepsilon}-X_{t}|>\lambda\varepsilon,\ \tau_{j}^{\varepsilon}\leqslant1)\leqslant K^{\frac{\lambda}{12}}\varepsilon^{\frac{\lambda}{12}(\mu-\nu_{0})}.
\]

The estimate of $P(\sup_{\tau_{l}^{\varepsilon}\leqslant t\leqslant1}|X_{t}^{\varepsilon}-X_{t}|>\lambda\varepsilon,\ M_{H}^{\varepsilon}=l)$
is exactly the same as above since on $\{M_{H}^{\varepsilon}=l\},$
there will be no visit of boxes other than $H_{\boldsymbol{m}_{l}^{\varepsilon}}^{\varepsilon}$
during $[\tau_{l}^{\varepsilon},1].$

Now consider $P(M_{H}^{\varepsilon}>k).$ By Lemma \ref{lem 3.1},
if $k>\frac{2C}{\varepsilon^{\mu}}$,
\begin{eqnarray*}
P(M_{H}^{\varepsilon}>k) & = & \sum_{l=k+1}^{\infty}P(M_{H}^{\varepsilon}=l)\\
 & \leqslant & \sum_{l=k+1}^{\infty}4Nle^{-\frac{\varepsilon^{2\mu}l}{8NdC^{2}}}\\
 & \leqslant & \frac{4Ne^{-k\tilde{C}\varepsilon^{2\mu}}}{(1-e^{-\tilde{C}\varepsilon^{2\mu}})^{2}}+\frac{4Nke^{-k\tilde{C}\varepsilon^{2\mu}}}{1-e^{-\tilde{C}\varepsilon^{2\mu}}},
\end{eqnarray*}
where $\tilde{C}=\frac{1}{8NdC^{2}}.$ Choose a universal constant
$\gamma>>2\mu,$ and let $k=[\frac{1}{\varepsilon^{\gamma}}]$ (when
$\varepsilon$ is small, the condition $k>\frac{2C}{\varepsilon^{\mu}}$
in Lemma \ref{lem 3.1} is satisfied). It follows that 
\[
P(M_{H}^{\varepsilon}>k)\leqslant C'(\frac{e^{-\frac{\tilde{C}}{\varepsilon^{\gamma-2\mu}}}}{(1-e^{-\tilde{C}\varepsilon^{2\mu}})^{2}}+\frac{1}{\varepsilon^{\gamma}}\frac{e^{-\frac{\tilde{C}}{\varepsilon^{\gamma-2\mu}}}}{1-e^{-\tilde{C}\varepsilon^{2\mu}}}),
\]
where $C'$ is a positive universal constant.

Combining with the estimates before, we arrive at 
\begin{align}
 & P(\sup_{0\leqslant t\leqslant1}|X_{t}^{\varepsilon}-X_{t}|>\lambda\varepsilon)\nonumber \\
\leqslant & C'(K^{\frac{\lambda}{12}}\varepsilon^{\frac{\lambda}{12}(\mu-\nu_{0})-2\gamma}+\frac{e^{-\frac{\tilde{C}}{\varepsilon^{\gamma-2\mu}}}}{(1-e^{-\tilde{C}\varepsilon^{2\mu}})^{2}}+\frac{1}{\varepsilon^{\gamma}}\frac{e^{-\frac{\tilde{C}}{\varepsilon^{\gamma-2\mu}}}}{1-e^{-\tilde{C}\varepsilon^{2\mu}}}).\label{convergence}
\end{align}
Finally, choose a positive universal integer $\lambda$ such that
\[
\lambda>\frac{24\gamma+24}{\mu-\nu_{0}}
\]
and $\frac{\lambda}{12}$ is a positive integer. By taking $\varepsilon_{n}=1/n,$
we have 
\[
\sum_{n=1}^{\infty}P(\sup_{0\leqslant t\leqslant1}|X_{t}^{\varepsilon_{n}}-X_{t}|>\lambda\varepsilon_{n})<\infty.
\]
Borel-Cantelli's lemma then yields the desired result.

Now the proof is complete.
\end{proof}

Let $\mu'>\mu$ be another universal constant. Define $(V_{z}^{\varepsilon},\zeta_{k}^{\varepsilon},\boldsymbol{n}_{k}^{\varepsilon},M_{V}^{\varepsilon},\widetilde{X}^{\varepsilon})$
in the same way as $(H_{z}^{\varepsilon},\tau_{k}^{\varepsilon},\boldsymbol{m}_{k}^{\varepsilon},M_{H}^{\varepsilon},X^{\varepsilon})$
only with $\mu$ replaced by $\mu'$, then Proposition \ref{prop 3.1}
is also true for $\widetilde{X}^{\varepsilon}$ (with $\varepsilon_{n}=\frac{1}{n}$
as in the proof of Proposition \ref{prop 3.1}).

To complete the proof, it remains to trace the diffusion paths via
extended Stratonovich's signatures ``between'' smaller boxes $H_{z}^{\varepsilon}$
and larger boxes $V_{z}^{\varepsilon},$ and prove a squeeze theorem
so that we are able to pass to the same limit $X_{t}.$

\subsection{Constructing differential 1-forms and using extended Stratonovich's
signatures}

To trace the diffusion paths by using extended Stratonovich's signatures,
we first need to construct suitable compactly supported differential
$1$-forms, such that the Stratonovich's integral of any such $1$-form
$\phi$ along the diffusion paths over the duration of visit of $\mbox{supp \ensuremath{\phi}}$
is nonzero with probability one.

To this end, it suffices to construct a suitable differential 1-form
$\phi$ on $\mathbb{R}^{N}$ with compact support such that the family
of vector fields on $\mathbb{R}^{N+1}$:
\[
\{\left(\begin{array}{c}
V_{1}\\
\phi\cdot V_{1}
\end{array}\right),\cdots,\left(\begin{array}{c}
V_{d}\\
\phi\cdot V_{d}
\end{array}\right);\left(\begin{array}{c}
V_{0}\\
\phi\cdot V_{0}
\end{array}\right)\}
\]
satisfies H$\ddot{\mbox{o}}$rmander's condition on $(\mbox{supp \ensuremath{\phi)}}\times\mathbb{R}^{1}$
so that the generator of the diffusion process on $\mathbb{R}^{N+1}$
defined by 
\[
\begin{cases}
dX_{t}=\sum_{\alpha=1}^{d}V_{\alpha}(X_{t})\circ dW_{t}^{\alpha}+V_{0}(X_{t})dt,\\
dX_{t}^{N+1}=\phi(X_{t})\circ dX_{t},
\end{cases}
\]
is hypoelliptic on $(\mbox{supp \ensuremath{\phi)}}\times\mathbb{R}^{1}$,
which ensures the existence of smooth probability densities of certain
Wiener functionals. Here and thereafter we use the geometric notation
for convenience (so $V_{\alpha}$ is a regarded as a column vector
and $\phi$ is regarded as a row vector in $\mathbb{R}^{N}$). In
fact, if this is possible, then we can proceed in the same way as
Lemma 2.1, Lemma 2.2 and Lemma 2.3 in \cite{lejan2012stratonovich}
to show that the Stratonovich's integral of $\phi$ along the diffusion
paths over the duration of visit of $\mbox{supp \ensuremath{\phi}}$
is nonzero with probability one, since starting from this point the
proof relies only on the strong Markov property and again the results
in \cite{ben1984poisson}, which hold true from our assumptions on
the generating vector fields $\{V_{1},\cdots,V_{d};V_{0}\}.$ 

Now to make it more precise, for $z\in\mathbb{Z}^{N}$ and $\varepsilon>0,$
we are interested in constructing a differential $1$-form $\phi_{z}^{\varepsilon}$
such that 
\[
H_{z}^{\varepsilon}\subset(\mbox{supp \ensuremath{\phi_{z}^{\varepsilon}}})^{\circ}\subset\mbox{supp \ensuremath{\phi_{z}^{\varepsilon}}}\subset(V_{z}^{\varepsilon})^{\circ},
\]
and $\phi_{z}^{\varepsilon}$ has the property mentioned before.

The following result gives the desired construction.

\begin{prop}
\label{prop 3.2}Assume that the family of vector field $\{V_{1},\cdots,V_{d};V_{0}\}$
satisfies H$\ddot{\mbox{o}}$rmander's condition at every point $x$
in $\mathbb{R}^{N}$. Let $G$ be a bounded domain in $\mathbb{R}^{N}$
and $W$ be an open subdomain of $G$ such that 
\[
W\subset\subset G.
\]
Let $\eta\in C_{0}^{\infty}(\mathbb{R}^{N})$ be a cut-off function
of $\overline{W}$, that is, $0\leqslant\eta\leqslant1,$ $\eta\equiv1$
on $\overline{W}$ and $\eta=0$ outside a small neighborhood of $\overline{W}$.
Then there exists $\Lambda>0,$ such that for any $\xi\in\mathbb{R}^{N}$
with $|\xi|>\Lambda$, if we define the differential $1$-form $\phi$
on $\mathbb{R}^{N}$ by 
\begin{equation}
\phi(x)=\eta(x)e^{-\frac{1}{2}|x-\xi|^{2}}(dx^{1}+\cdots+dx^{N}),\label{1-form}
\end{equation}
and define the vector field $\widetilde{V_{\alpha}}$ on $\mathbb{R}^{N+1}$
(independent of $x^{N+1}$) by
\begin{equation}
\widetilde{V}_{\alpha}=\left(\begin{array}{c}
V_{\alpha}\\
\phi\cdot V_{\alpha}
\end{array}\right),\ \ \ \alpha=0,1,\cdots d,\label{vector fields in R^(N+1)}
\end{equation}
then the family of vector fields 
\[
\{\widetilde{V}_{1},\cdots,\widetilde{V}_{d};\widetilde{V}_{0}\}
\]
satisfies H$\ddot{\mbox{o}}$rmander's condition at every point on
$(\mbox{supp \ensuremath{\phi)\times\mathbb{R}}}$. In other words,
the differential operator $\widetilde{L}$ on $\mathbb{R}^{N+1}$
defined by 
\begin{equation}
\widetilde{L}=\frac{1}{2}\sum_{\alpha=1}^{d}\widetilde{V}_{\alpha}^{2}+\widetilde{V}_{0}\label{tilde L}
\end{equation}
is hypoelliptic on $(\mbox{supp \ensuremath{\phi})\ensuremath{\times\mathbb{R}}}$.\end{prop}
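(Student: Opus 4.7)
The plan is to verify H\"ormander's condition for $\{\widetilde{V}_1,\ldots,\widetilde{V}_d;\widetilde{V}_0\}$ at each $(x_0,y_0)\in(\mathrm{supp}\,\phi)\times\mathbb{R}$ in three steps. Since each $\widetilde{V}_\alpha$ has no dependence on $x^{N+1}$, the projection $\pi:\mathbb{R}^{N+1}\to\mathbb{R}^N$ sends iterated brackets of the $\widetilde{V}_\alpha$'s to the corresponding iterated brackets of the $V_\alpha$'s, so $\pi_\ast\widetilde{\mathcal{L}}(x_0,y_0)=\mathcal{L}(x_0)=\mathbb{R}^N$ by the hypothesis. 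Consequently $\widetilde{\mathcal{L}}(x_0,y_0)$ is either the whole of $\mathbb{R}^{N+1}$ (H\"ormander holds) or the graph of some linear functional $\ell\in(\mathbb{R}^N)^\ast$ (H\"ormander fails), and the problem reduces to ruling out the second alternative.

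Assume the graph functional $\ell$ exists and write $\pi_1,\pi_2$ for the horizontal and vertical components of vectors in $\mathbb{R}^{N+1}$. The failure condition becomes the family of identities $\pi_2(\widetilde{W})(x_0)=\ell(\pi_1(\widetilde{W})(x_0))$ for every iterated bracket $\widetilde{W}$. Unfolding these using the key identity
\[
\pi_2([\widetilde{V}_\alpha,\widetilde{V}_\beta])=V_\alpha(\phi(V_\beta))-V_\beta(\phi(V_\alpha))=d\phi(V_\alpha,V_\beta)+\phi([V_\alpha,V_\beta])
\]
and its higher-order analogues produces a hierarchy of algebraic constraints on the jets of $\phi$ at $x_0$: level one forces $\ell$ to agree with $\phi(x_0)$ on $\mathrm{span}\{V_\alpha(x_0)\}$, level two yields $d\phi(V_\alpha,V_\beta)(x_0)=(\ell-\phi(x_0))([V_\alpha,V_\beta](x_0))$, and higher levels inductively determine $\ell$ on the increasing flag $\mathcal{E}_k=\mathrm{span}\{W(x_0):W$ is an iterated bracket of order $\leq k\}$, which by H\"ormander on the $V_\alpha$'s eventually exhausts $\mathbb{R}^N$.

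Next I would exploit the explicit form of $\phi=\eta(x)e^{-|x-\xi|^2/2}(dx^1+\cdots+dx^N)$. Writing $f=\eta\,e^{-|x-\xi|^2/2}$ and $u_\gamma=V_\gamma^1+\cdots+V_\gamma^N$, a direct computation on the open set where $\eta\equiv 1$ gives
\[
d\phi(V_\alpha,V_\beta)(x)=f(x)\,\langle\,\xi-x,\;u_\beta(x)V_\alpha(x)-u_\alpha(x)V_\beta(x)\,\rangle,
\]
and iterated differentiation shows that the vertical components of higher-order brackets of the $\widetilde{V}_\alpha$'s are, modulo the positive factor $f(x_0)$, polynomials in $\xi$ whose coefficients are Lie-derivative combinations of the $V_\alpha$'s at $x_0$. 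The failure hierarchy thus becomes an overdetermined system of polynomial equations in $\xi$ for each fixed $x_0$, and for each $x_0$ the set of admissible $\xi$ lies in a proper algebraic subvariety of $\mathbb{R}^N$. Compactness of $\mathrm{supp}\,\phi$ together with smoothness of the $V_\alpha$'s then yields a single threshold $\Lambda>0$ such that for $|\xi|>\Lambda$ no such $\ell$ can exist at any $x_0\in\mathrm{supp}\,\phi$, giving the required contradiction.

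The principal obstacle is the degenerate case in which the $V_\alpha(x_0)$'s do not span $\mathbb{R}^N$: there $\ell$ is only partially pinned down at level one and must be tracked through the successive quotients $\mathcal{E}_k/\mathcal{E}_{k-1}$ before a $\xi$-polynomial obstruction emerges, forcing a careful induction on the order of brackets. A secondary technicality is the uniform handling of boundary points of $\mathrm{supp}\,\phi$ where $\phi(x_0)=0$ and the $d\eta$-contributions must be folded into the $\xi$-polynomial terms, but since $\eta$ is fixed and $\mathrm{supp}\,\phi$ is compact, these contributions only perturb the threshold $\Lambda$ without altering the conclusion.
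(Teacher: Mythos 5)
Your overall architecture is faithful to the paper's: the observation that each $\widetilde V_\alpha$ (and hence each iterated bracket) projects under $\pi:\mathbb R^{N+1}\to\mathbb R^N$ onto the corresponding bracket of the $V_\alpha$'s, the resulting dichotomy that at each point the generated Lie algebra is either all of $\mathbb R^{N+1}$ or the graph of a linear functional $\ell\in(\mathbb R^N)^*$, the Cartan--formula unfolding $\pi_2([\widetilde V_\alpha,\widetilde V_\beta])=d\phi(V_\alpha,V_\beta)+\phi([V_\alpha,V_\beta])$, and the observation that on $\{\eta\equiv1\}$ the vertical components of brackets are of the form $p(\xi;x)e^{-|x-\xi|^2/2}$ with $p$ polynomial in $\xi$ of degree increasing with bracket order --- all of this matches the paper's ansatz $\widetilde V_{[\theta]}=\bigl(\begin{smallmatrix}V_{[\theta]}\\ g_{[\theta]}+\phi\cdot V_{[\theta]}\end{smallmatrix}\bigr)$, $g_{[\theta]}=p_{[\theta]}(\xi;x)e^{-|x-\xi|^2/2}$. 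The graph-functional reformulation is a clean repackaging of the paper's direct check that $\widetilde V_{[\theta]}\notin\mathrm{Span}\{\widetilde V_{[\theta^{(1)}]},\dots,\widetilde V_{[\theta^{(N)}]}\}$, so the two routes are logically dual, not genuinely different.

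However, there is a real gap at the step you yourself flag as the principal obstacle. You assert that for each $x_0$ ``the set of admissible $\xi$ lies in a proper algebraic subvariety of $\mathbb R^N$,'' but this is precisely the substantive claim of the proposition, and it is not automatic: all the consistency constraints could in principle be identically satisfied for every $\xi$ if the leading $\xi$-coefficients of the $p_{[\theta]}$'s happened to vanish or if the brackets furnishing new directions carried no higher-degree $\xi$-dependence. The paper closes this gap concretely: it fixes brackets $\theta^{(1)},\dots,\theta^{(N)}$ of order $\leqslant M$ whose horizontal parts form a local basis, then chooses a further bracket $\theta$ with $\deg_\xi p_{[\theta]}>M$ on a compact neighbourhood $\overline{U_0}$. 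Since $\deg_\xi p_{[\theta^{(i)}]}\leqslant|\theta^{(i)}|-1\leqslant M-1$, the relation $g_{[\theta]}=\sum_i\lambda^i g_{[\theta^{(i)}]}$ (forced by linear dependence) equates polynomials in $\xi$ of strictly different degrees with $x$-bounded coefficients, which fails for $|\xi|$ large. That explicit degree mismatch is the engine of the whole proof, and your sketch invokes it only implicitly (``polynomials in $\xi$ whose coefficients are Lie-derivative combinations''). A second, smaller omission is that the passage from a pointwise threshold $\Lambda(x_0)$ to a uniform $\Lambda$ over the compact support requires the degree-$>M$ coefficient of $p_{[\theta]}$ to be bounded away from zero on an open neighbourhood $\overline{U_0}$ of $x_0$; the paper builds this in via the choice of $\overline{U_0}$, whereas your appeal to ``compactness of $\mathrm{supp}\,\phi$'' presumes it. So the structure is right and the local computations are correct, but the degree-growth argument at the heart of the proof still needs to be supplied.
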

\begin{proof}
For a differential $1$-form $\phi$ on $\mathbb{R}^{N}$ defined
by (\ref{1-form}), define the vector fields
\[
\{\widetilde{V}_{1},\cdots,\widetilde{V}_{d};\widetilde{V}_{0}\}
\]
on $\mathbb{R}^{N+1}$ by (\ref{vector fields in R^(N+1)}). Note
that $\mbox{supp \ensuremath{\phi}}$ is independent of $\xi$, which
will be denoted by $K.$

Let
\begin{eqnarray*}
\Theta_{1} & = & \{1,2,\cdots,d\};\\
\Theta_{n} & = & \{(\alpha_{1},\cdots,\alpha_{n}):\ \alpha_{i}=0,1,\cdots,d\},\ n\geqslant2;\\
\Theta & = & \bigcup_{n=1}^{\infty}\Theta_{n}.
\end{eqnarray*}
For $\theta=(\theta_{1},\cdots,\theta_{n})\in\Theta_{n},$ denote
$|\theta|=n$, and we use the notation $V_{[\theta]}$ ($\widetilde{V}_{[\theta]}$,
respectively) to denote $[V_{\theta_{1}},[V_{\theta_{2}},\cdots,[V_{\theta_{n-1}},V_{\theta_{n}}]]]$
($[\widetilde{V}_{\theta_{1}},[\widetilde{V}_{\theta_{2}},\cdots,[\widetilde{V}_{\theta_{n-1}},\widetilde{V}_{\theta_{n}}]]]$,
respectively). 

We first prove that for any $\theta\in\Theta,$ $\widetilde{V}_{[\theta]}$
can be written as 
\[
\widetilde{V}_{[\theta]}=\left(\begin{array}{c}
V_{[\theta]}\\
g_{[\theta]}+\phi\cdot V_{[\theta]}
\end{array}\right)
\]
for some $g_{[\theta]}\in C_{b}^{\infty}(\mathbb{R}^{N+1})$ independent
of $x^{N+1}.$ In fact, when $\theta\in\Theta_{1},$ it is just the
definition of $\widetilde{V}_{[\theta]}$. Assume that it is true
for any $\theta\in\Theta_{n}.$ Let $\theta\in\Theta_{n+1},$ then
there exists some $0\leqslant\alpha\leqslant d$ and $\theta'\in\Theta_{n}$,
such that 
\[
V_{[\theta]}=[V_{\alpha},V_{[\theta']}],\ \widetilde{V}_{[\theta]}=[\widetilde{V}_{\alpha},\widetilde{V}_{[\theta']}].
\]
By the induction hypothesis, we have 
\begin{eqnarray*}
\widetilde{V}_{[\theta]} & = & \left[\left(\begin{array}{c}
V_{\alpha}\\
\phi\cdot V_{\alpha}
\end{array}\right),\left(\begin{array}{c}
V_{[\theta']}\\
g_{[\theta']}+\phi\cdot V_{[\theta']}
\end{array}\right)\right]\\
 & = & \left(\begin{array}{c}
[V_{\alpha},V_{[\theta']}]\\
\nabla^{N}(g_{[\theta']}+\phi\cdot V_{[\theta']})\cdot V_{\alpha}-\nabla^{N}(\phi\cdot V_{\alpha})\cdot V_{[\theta']}
\end{array}\right)\\
 & = & \left(\begin{array}{c}
V_{[\theta]}\\
g_{[\theta]}+\phi\cdot V_{[\theta]}
\end{array}\right),
\end{eqnarray*}
where 
\[
g_{[\theta]}=\nabla^{N}g_{[\theta']}\cdot V_{\alpha}+V_{[\theta']}^{T}\cdot\nabla^{N}\phi^{T}\cdot V_{\alpha}-V_{\alpha}^{T}\cdot\nabla^{N}\phi^{T}\cdot V_{[\theta']}\in C_{b}^{\infty}(\mathbb{R}^{N+1}),
\]
which is independent of $x^{N+1}$. Here $\nabla^{N}$ denotes the
gradient operator with respect to $x^{(N)}=(x^{1},\cdots,x^{N})$
and $(\cdot)^{T}$ denotes the transpose operator.

Now we are going to prove the result by a compactness argument. 

A key observation is that for any fixed $ $$\theta\in\Theta,$ let
$g_{[\theta]}\in C_{b}^{\infty}(\mathbb{R}^{N+1})$ be such that 
\[
\widetilde{V}_{[\theta]}=\left(\begin{array}{c}
V_{[\theta]}\\
g_{[\theta]}+\phi\cdot V_{[\theta]}
\end{array}\right)
\]
as in the previous discussion, then $g_{[\theta]}$ is of the form
\[
g_{[\theta]}(x)=p_{[\theta]}(\xi;x)e^{-\frac{1}{2}|x-\xi|^{2}},
\]
where $p_{[\theta]}(\xi;x)$ is a polynomial in $\xi=(\xi^{1},\cdots,\xi^{N})$
with $C_{b}^{\infty}$ coefficients depending only on $x^{(N)}.$
Here the degree of $p_{[\theta]}$ is at most $|\theta|-1.$ In other
words, 
\[
p_{[\theta]}(\xi;x)=\sum_{j=0}^{|\theta|-1}\sum_{|\boldsymbol{\alpha}|=j}c_{\boldsymbol{\alpha}}(x^{(N)})\xi^{\boldsymbol{\alpha}}.
\]

Fix $x_{0}=(x_{0}^{(N)},x_{0}^{N+1})\in K^{\circ}\times\mathbb{R}^{1},$
where $x_{0}^{(N)}=(x_{0}^{1},\cdots,x_{0}^{N})\in\mathbb{R}^{N}.$
By the hypoellipticity of $L$ and continuity, there exists a neighborhood
$U\subset K^{\circ}$ of $x_{0}^{(N)}$ and $\theta^{(1)},\cdots,\theta^{(N)}\in\Theta,$
such that for any $x^{(N)}\in U$, 
\[
\{V_{[\theta^{(1)}]}(x^{(N)}),\cdots,V_{[\theta^{(N)}]}(x^{(N)})\}
\]
constitutes a basis of $\mathbb{R}^{N}$. It follows that for any
$x\in U\times\mathbb{R}^{1},$ the family of vectors in $\mathbb{R}^{N+1}$
\[
\{\widetilde{V}_{[\theta^{(1)}]}(x),\cdots,\widetilde{V}_{[\theta^{(N)}]}(x)\}
\]
generate an $N$-dimensional subspace of $\mathbb{R}^{N+1}.$ Let
$M=\max\{|\theta^{(1)}|,\cdots,|\theta^{(N)}|\}.$ Again by the assumptions
on $L$ and continuity, it is possible to choose $\theta\in\Theta$
with $|\theta|>M$, such that 
\begin{equation}
\mbox{degree \ensuremath{(p_{[\theta]})}}>M\label{degree}
\end{equation}
in some compact neighborhood $\overline{U_{0}}\subset U$ of $x_{0}^{(N)}.$
In particular, the choice of $\theta$ and $U_{0}$ is independent
of the $\xi$ since the coefficents of $p_{[\theta]}$ are functions
of $x$ only. 

Now we are going to show that there exists $\Lambda>0,$ such that
when $\xi\in\mathbb{R}^{N}$ with $|\xi|>\Lambda$, the vector field
$\widetilde{V}_{[\theta]}$ cannot be generated by $ $$\{\widetilde{V}_{[\theta^{(1)}]},\cdots,\widetilde{V}_{[\theta^{(N)}]}\}$
in $U_{0}\times\mathbb{R}^{1},$ so that 
\[
\mbox{dim Span\{\ensuremath{\widetilde{V}_{[\theta^{(1)}]},\cdots,\widetilde{V}_{[\theta^{(N)}]},\widetilde{V}_{[\theta]}}\}}=N+1,
\]
which yields the hypoellipticity of $\widetilde{L}$ defined by \ref{tilde L}
in $U_{0}\times\mathbb{R}^{1}.$ 

To prove this, first notice that there exists $\lambda^{i}(x^{(N)})\in C_{b}^{\infty}(U_{0}),$
such that 
\[
V_{[\theta]}(x^{(N)})=\sum_{i=1}^{N}\lambda^{i}(x^{(N)})V_{[\theta^{(i)}]}(x^{(N)}),\ \ \ \mbox{for }x^{(N)}\in U_{0}.
\]
Moreover, from (\ref{degree}) it is easy to see that there exists
$\Lambda>0,$ such that 
\begin{equation}
p_{[\theta]}(\xi;x)\neq\sum_{i=1}^{N}\lambda^{i}(x^{(N)})p_{[\theta^{(i)}]}(\xi;x)\label{degree not equal}
\end{equation}
for $\xi\in\mathbb{R}^{N}$ with $|\xi|>\Lambda$ and $x\in U_{0}\times\mathbb{R}^{1}.$
If 
\[
\widetilde{V}_{[\theta]}(x_{1})\in\mbox{Span}\{\widetilde{V}_{[\theta^{(1)}]}(x_{1}),\cdots,\widetilde{V}_{[\theta^{(N)}]}(x_{1})\}
\]
for some $x_{1}\in U_{0}\times\mathbb{R}^{1},$ then we must have
\[
\widetilde{V}_{[\theta]}(x_{1})=\sum_{i=1}^{N}\lambda^{i}(x_{1}^{(N)})\widetilde{V}_{[\theta^{(i)}]}(x_{1}).
\]
It follows from simple calculation that 
\begin{equation}
g_{[\theta]}(x_{1})=\sum_{i=1}^{N}\lambda^{i}(x_{1}^{(N)})g_{[\theta^{(i)}]}(x_{1}).\label{g=00003Dlambda g}
\end{equation}
This is a contradiction to (\ref{degree not equal}) when $|\xi|>\Lambda.$
Therefore, $\widetilde{V}_{[\theta]}$ cannot be generated by $\{\widetilde{V}_{[\theta^{(1)}]},\cdots,\widetilde{V}_{[\theta^{(N)}]}\}$
in $U_{0}\times\mathbb{R}^{1}$ if we choose $\xi$ with $|\xi|>\Lambda$
in the definition of $\phi.$ 

The case when $x_{0}\in\partial K\times\mathbb{R}^{1}$ can be proved
in the same way by replacing $U_{0}$ with $U_{0}\cap K.$

Finally, combining with the above local results and by the compactness
of $K,$ we are able to choose $\Lambda>0$ (depending on $K$), such
that for any $\xi\in\mathbb{R}^{N}$ with $|\xi|>\Lambda,$ the differential
operator $\widetilde{L}$ is hypoelliptic on $K\times\mathbb{R}^{1}.$

Now the proof is complete.
\end{proof}

For $z\in\mathbb{Z}^{N}$ and $\varepsilon>0,$ by taking $W=H_{z}^{\varepsilon}$
and $G=V_{z}^{\varepsilon},$ we can construct a differential $1$-form
$\phi_{z}^{\varepsilon}$ supported in $G$ according to Lemma \ref{prop 3.2}
(just take some fixed admissible $\xi\in\mathbb{R}^{N}$ as in the
lemma). By proceeding in the same way as in \cite{lejan2012stratonovich},
we conclude that the Stratonovich's integral of $\phi_{z}^{\varepsilon}$
along the diffusion paths over the duration of visit of $\mbox{supp \ensuremath{\phi_{z}^{\varepsilon}}}$
is nonzero with probability one.

Now we are going to construct extended Stratonovich's signatures to
trace the original diffusion paths by using these differential $1$-forms
$\phi_{z}^{\varepsilon}$.

We first define extended Stratonovich's signatures formally. 

For smooth differential forms $\psi^{1},\cdots,\psi^{k}$ on $\mathbb{R}^{N}$,
the iterated Stratonovich's integral $[\psi^{1},\cdots,\psi^{k}]_{s,t}$
($0\leqslant s<t\leqslant1$) defined inductively by 
\[
[\psi^{1},\cdots,\psi^{k}]_{s,t}=\int_{s<u<t}[\psi^{1},\cdots,\psi^{k-1}]_{s,u}\psi^{k}(\circ dX_{u}),
\]
where 
\[
[\psi^{1}]_{s,t}=\sum_{i=1}^{N}\int_{s<u<t}\psi_{i}^{1}(X_{u})\circ dX_{u}^{i},
\]
is called an \textit{extended Stratonovich's signature} of the diffusion
process $X_{t}.$

The following lemma allows us to use extended Stratonovich's signatures
for our study. The case of Brownian motion was proved in \cite{lejan2012stratonovich},
but we can easily adopt the proof to the our case without changing
anything (in fact, the proof does not rely on probabilistic features,
but only on paths). Recall that $\mathcal{G}_{1}$ is the completion
of the $\sigma$-algebra generated by the Stratonovich's signature
of $X_{t}$ over $[0,1].$ 
\begin{lem}
If $\psi^{1},\cdots,\psi^{k}$ are smooth differential $1$-forms
on $\mathbb{R}^{N}$ with compact supports, then 
\[
[\psi^{1},\cdots,\psi^{k}]_{0,1}
\]
 is $\mathcal{G}_{1}$-measurable.\end{lem}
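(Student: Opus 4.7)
The plan is to reduce to the case of polynomial $1$-forms by approximation, to verify directly that iterated Stratonovich integrals of polynomial $1$-forms along $X$ are polynomials in the basic iterated integrals $\{[j_{1},\ldots,j_{n}]_{0,1}\}$ generating $\mathcal{G}_{1}$, and then to pass to the limit using standard estimates for Stratonovich integrals. Since the $\psi^{i}$ have compact support, there is a ball $B_{R}(0)$ containing $\bigcup_{i}\mathrm{supp}(\psi^{i})$. Fix any $R'>R$. The Stone\textendash{}Weierstrass theorem applied in the Banach space $C^{2}(\overline{B_{R'}(0)})$ produces, for each $i$, a sequence of polynomial $1$-forms $\psi^{i,(m)}$ with $\psi^{i,(m)}\to\psi^{i}$ in $C^{2}(\overline{B_{R'}(0)})$ as $m\to\infty$. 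The $C^{2}$-mode of convergence will be enough to control the It\^{o}\textendash{}Stratonovich correction that appears when we pass to the limit.

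For the polynomial case I will prove by induction on the depth $k$ that $[\tilde\psi^{1},\ldots,\tilde\psi^{k}]_{0,1}$ is a polynomial in the basic iterated Stratonovich integrals $[j_{1},\ldots,j_{n}]_{0,1}$ whenever each $\tilde\psi^{i}$ is a polynomial $1$-form. The key algebraic fact is the shuffle product identity
\[
[I]_{0,t}\cdot [J]_{0,t}=\sum_{K\in\mathrm{Sh}(I,J)}[K]_{0,t},
\]
which, specialised to single-letter words, expresses each monomial $X^{i_{1}}_{u}\cdots X^{i_{r}}_{u}$ as a sum of basic iterated integrals $[k_{1},\ldots,k_{r}]_{0,u}$. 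Combining this with the inductive definition
\[
[\tilde\psi^{1},\ldots,\tilde\psi^{k}]_{0,1}=\sum_{j=1}^{N}\int_{0}^{1}[\tilde\psi^{1},\ldots,\tilde\psi^{k-1}]_{0,u}\,p^{k}_{j}(X_{u})\circ dX^{j}_{u},
\]
where $p^{k}_{j}$ is the polynomial coefficient of $\tilde\psi^{k}$ in the $j$-th coordinate, together with the induction hypothesis that $[\tilde\psi^{1},\ldots,\tilde\psi^{k-1}]_{0,u}$ is a linear combination of basic iterated integrals $[I]_{0,u}$, one obtains an integrand that is a linear combination of products $[I]_{0,u}\cdot X^{\alpha}_{u}$. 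A second application of the shuffle identity converts each such product into a sum of basic iterated integrals $[K]_{0,u}$, and finally $\int_{0}^{1}[K]_{0,u}\circ dX^{j}_{u}=[K,j]_{0,1}$, which is a component of the Stratonovich signature and therefore $\mathcal{G}_{1}$-measurable.

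For the passage to the limit, on the event $A_{R'}=\{\sup_{0\leqslant t\leqslant 1}|X_{t}|\leqslant R'\}$ the diffusion paths remain inside $\overline{B_{R'}(0)}$; rewriting each Stratonovich integral as an It\^{o} integral plus the usual bracket correction and applying the Burkholder\textendash{}Davis\textendash{}Gundy inequality together with the $C^{2}$-convergence of the approximants shows that $[\psi^{1,(m)},\ldots,\psi^{k,(m)}]_{0,1}\to[\psi^{1},\ldots,\psi^{k}]_{0,1}$ in probability on $A_{R'}$. Since $P(A_{R'})\to 1$ as $R'\to\infty$, this convergence holds in probability on the whole space, and completeness of $\mathcal{G}_{1}$ yields the $\mathcal{G}_{1}$-measurability of the limit. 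I expect the combinatorial step, namely the shuffle-based bookkeeping that turns every iterated integral of polynomial integrands into a polynomial in basic signature components, to be the main technical obstacle; once that is in place, the approximation argument is a routine exercise in stochastic analysis.
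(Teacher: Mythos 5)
Your proposal follows essentially the same strategy as the paper's cited source (Le Jan--Qian, Lemma~1.3): approximate the compactly supported $1$-forms by polynomial $1$-forms, use the shuffle-product identity (valid for Stratonovich, i.e.\ geometric, iterated integrals) to express iterated integrals of polynomial integrands as polynomials in the basic signature components, and pass to the limit using completeness of $\mathcal{G}_1$. Two technical points deserve care, though neither is a structural flaw. First, the Stone--Weierstrass theorem gives density of polynomials in $C^0(\overline{B_{R'}})$, not in $C^2(\overline{B_{R'}})$; what you actually need is the (true, but distinct) fact that polynomials are dense in $C^k$ on compact sets, obtainable e.g.\ by mollifying $\psi^i$ and then truncating Taylor series, or by multivariate Bernstein polynomials. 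Second, the passage to the limit is only asserted: the iterated Stratonovich integrals are nested, so converting to It\^o form at depth $k$ produces, via repeated quadratic-variation corrections, integrands involving the lower-depth iterated integrals $[\psi^{1,(m)},\ldots,\psi^{j,(m)}]_{0,u}$ and first derivatives of the $\psi^{i,(m)}$; one must run an induction on $k$, stop the process at $T_{R'}=\inf\{t:\,|X_t|>R'\}\wedge 1$ rather than merely restricting to the event $A_{R'}$ (which is not $\mathcal{F}_0$-measurable, so ``the stochastic integral depends only on the integrand on $\overline{B_{R'}}$'' must be phrased via stopping), and then apply BDG to the stopped martingale parts. With those details filled in the argument is correct and matches the approach the paper delegates to the reference.
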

\begin{proof}
See \cite{lejan2012stratonovich}, Lemma 1.3.
\end{proof}

For $m\geqslant0,$ let 
\[
\mathcal{W}_{m}=\{(z_{0}=(0,\cdots,0),z_{1},\cdots,z_{m}):\ z_{i}\in\mathbb{Z}^{N},\ z_{i}\neq z_{i-1},\ i=1,2,\cdots,m\}.
\]
An element $(z_{0},z_{1},\cdots,z_{m})\in\mathcal{W}_{m}$ is called
an admissible word of length $m+1$. For $\varepsilon>0,$ define
the $\mathcal{G}_{1}$-measurable random variable $M^{\varepsilon}$
to be the supremum of those $m\geqslant0$ such that
\[
[\phi_{z_{0}}^{\varepsilon},\phi_{z_{1}}^{\varepsilon},\cdots,\phi_{z_{m}}^{\varepsilon}]_{0,1}\neq0
\]
for some admissible word $(z_{0},z_{1},\cdots,z_{m})\in\mathcal{W}_{m}.$
It follows that $M_{H}^{\varepsilon}\leqslant M^{\varepsilon}\leqslant M_{V}^{\varepsilon}$
for almost surely. For $m\geqslant0,$ $(z_{0},z_{1},\cdots,z_{m})\in\mathcal{W}_{m},$
define 
\[
A_{m;(z_{0},z_{1},\cdots,z_{m})}^{\varepsilon}=\{\omega:\ M^{\varepsilon}=m,\ [\phi_{z_{0}}^{\varepsilon},\phi_{z_{1}}^{\varepsilon},\cdots,\phi_{z_{m}}^{\varepsilon}]_{0,1}\neq0\},
\]
then $ $$\{A_{m;(z_{0},z_{1},\cdots,z_{m})}^{\varepsilon}:\ m\geqslant0,\ (z_{0},z_{1},\cdots,z_{m})\in\mathcal{W}_{m}\}$
are mutually disjoint $\mathcal{G}_{1}$-measurable sets whose union
is the whole space $\Omega.$ See \cite{lejan2012stratonovich} for
a more detailed discussion.

Let 
\[
\mathcal{W}=\bigcup_{m=0}^{\infty}\mathcal{W}_{m}
\]
be the space of admissible words. For $\varepsilon>0,$ define the
mapping $\mathcal{Y}^{\varepsilon}:\ \Omega\rightarrow\mathcal{W},$
\[
\mathcal{Y}^{\varepsilon}(\omega)=(z_{0},z_{1},\cdots,z_{m}),
\]
if $(z_{0},z_{1},\cdots,z_{m})$ is such that 
\[
\omega\in A_{m;(z_{0},z_{1},\cdots,z_{m})}^{\varepsilon}.
\]
It follows that $\mathcal{Y}^{\varepsilon}$ is well-defined and $\mathcal{G}_{1}$-measurable.
Intuitively, $\mathcal{Y}^{\varepsilon}$ is the maximal admissible
word such that the associated extended Stratonovich's signature is
nonzero. It is natural to believe that $\mathcal{Y}^{\varepsilon}$
records a reasonable amount of information of the diffusion paths
and as $\varepsilon\rightarrow0,$ it might be possible to recover
the diffusion paths.

\subsection{Completing the proof: a squeeze theorem for convergence in trajectory}

In Section 2, we defined piecewise linear trajectories (P.L.T.s),
parametrization of a P.L.T., and introduced the concept of convergence
in trajectory. In this section, we are going to show that if $\mathcal{Y}^{\varepsilon}$
is regarded as a P.L.T. in $\mathbb{Z}^{N}$, then by taking $\varepsilon_{n}=\frac{1}{n},$
with probability one, $\varepsilon_{n}\cdot\mathcal{Y}^{\varepsilon_{n}}$
converges in trajectory to $(X_{t})_{0\leqslant t\leqslant1},$ which
completes the proof of our main theorem.

Recall that a P.L.T. $\mathcal{T}$ is essentially a finite sequence
of points in $\mathbb{R}^{N}$ (not necessarily all distinct). 
\begin{defn}
For a P.L.T. $\mathcal{T},$ $\mathcal{T}^{-}$ is denoted as the
new P.L.T. by removing the last point of $\mathcal{T}.$ Let $\mathcal{T}_{1},\mathcal{T}_{2}$
be two P.L.T.s. $\mathcal{T}_{1}$ is called a \textit{sub}-P.L.T. of
$\mathcal{T}_{2}$ (denoted by $\mathcal{T}_{1}\prec\mathcal{T}_{2}$)
if $\mathcal{T}_{1}$ is a subsequence of $\mathcal{T}_{2}.$
\end{defn}

By the convergence result and the construction of $\phi_{z}^{\varepsilon}$
in the last two subsections, if we denote $\mathcal{X}^{\varepsilon}$
(respectively, $\widetilde{\mathcal{X}}^{\varepsilon}$) as the associated
P.L.T. of the piecewise linear path $X^{\varepsilon}$ (respectively,
$\widetilde{X}^{\varepsilon}$), then it is immediate that
\[
(\mathcal{X}^{\varepsilon})^{-}\prec\varepsilon\cdot\mathcal{Y}^{\varepsilon}\prec(\widetilde{\mathcal{X}}^{\varepsilon})^{-},
\]
with probability one, and $\mathcal{X}^{\varepsilon_{n}}$ and $\widetilde{\mathcal{X}}^{\varepsilon_{n}}$
both converges in trajectory to $(X_{t})_{0\leqslant t\leqslant1}$.
Therefore, it is natural to claim a certain kind of squeeze theorem
for convergence in trajectory so we may conclude that $\mathcal{Y}^{\varepsilon_{n}}$
also converges in trajectory to $(X_{t})_{0\leqslant t\leqslant1}$
with probability one. 

The following result is a squeeze theorem for convergence in trajectory
we are looking for, which is sufficient for our use.
\begin{prop}
\label{prop 3.3}Assume that $\{\mathcal{T}_{1}^{(n)}\}$, $\{\mathcal{T}_{2}^{(n)}\}$
are two sequence of P.L.T.s such that: 

(1) the first points of $\mathcal{T}_{1}^{(n)}$ and $\mathcal{T}_{2}^{(n)}$
are identical;

(2) the last two points of $\mathcal{T}_{i}^{(n)}$ are identical
($i=1,2$).

Let $\sigma_{i}^{(n)}$ be a parametrization of $\mathcal{T}_{i}^{(n)}$
($i=1,2$) such that the partition points in $\sigma_{1}^{(n)}$ belong
to the partition points in $\sigma_{2}^{(n)}$ and for any $t<1$
in $\sigma_{1}^{(n)},$
\[
\mathcal{T}_{1}^{(n)}(t|\sigma_{1}^{(n)})=\mathcal{T}_{2}^{(n)}(t|\sigma_{2}^{(n)}).
\]
(This assumption implies that $(\mathcal{T}_{1}^{(n)})^{-}\prec(\mathcal{T}_{2}^{(n)})^{-}$.)
Let $\{\mathcal{T}^{(n)}\}$ be a sequence of P.L.T.s such that 
\[
(\mathcal{T}_{1}^{(n)})^{-}\prec\mathcal{T}^{(n)}\prec(\mathcal{T}_{2}^{(n)})^{-},
\]
and $(\gamma_{t})_{0\leqslant t\leqslant1}$ be a continuous path
in $\mathbb{R}^{N}$ such that 
\[
\lim_{n\rightarrow\infty}\sup_{0\leqslant t\leqslant1}|\mathcal{T}_{i}^{(n)}(t|\sigma_{i}^{(n)})-\gamma_{t}|=0,\ \ \ i=1,2.
\]
Then we can choose a parametrization $\sigma^{(n)}$ of $\mathcal{T}^{(n)},$
such that 
\[
\lim_{n\rightarrow\infty}\sup_{0\leqslant t\leqslant1}|\mathcal{T}^{(n)}(t|\sigma^{(n)})-\gamma_{t}|=0.
\]
In particular, $\mathcal{T}^{(n)}$ converges in trajectory to $(\gamma_{t})_{0\leqslant t\leqslant1}.$\end{prop}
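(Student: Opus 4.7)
The plan is to construct $\sigma^{(n)}$ by inheriting the partition times of $\sigma_2^{(n)}$ at the vertices of $\mathcal{T}^{(n)}$, verify that the partition of $\sigma_1^{(n)}$ embeds into that of $\sigma^{(n)}$, and then show that $\mathcal{T}^{(n)}(\cdot|\sigma^{(n)})$ and $\mathcal{T}_1^{(n)}(\cdot|\sigma_1^{(n)})$ are uniformly close; a triangle inequality with the hypothesised convergence of $\mathcal{T}_1^{(n)}$ then yields the conclusion.

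Write $\sigma_2^{(n)}:0=t_1<\cdots<t_{m_2}=1$ with $\mathcal{T}_2^{(n)}=(v_1,\ldots,v_{m_2})$, and similarly $\sigma_1^{(n)}:0=\tau_1<\cdots<\tau_{m_1}=1$ with vertices $u_r$. The hypotheses on the parametrizations give $\tau_r=t_{j_r}$ and $u_r=v_{j_r}$ for $r<m_1$. Since $(\mathcal{T}_1^{(n)})^-\prec\mathcal{T}^{(n)}\prec(\mathcal{T}_2^{(n)})^-$, I fix an embedding $\mathcal{T}^{(n)}=(v_{i_1},\ldots,v_{i_k})$ into $(\mathcal{T}_2^{(n)})^-$, with $1=i_1<\cdots<i_k\leqslant m_2-1$, coherent with the embedding of $(\mathcal{T}_1^{(n)})^-$ in the sense that $\{j_r:r<m_1\}\subset\{i_l\}$. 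Then I define $\sigma^{(n)}$ by $s_l=t_{i_l}$ for $l<k$ and $s_k=1$.

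At every interior partition time $s_l<1$ of $\sigma^{(n)}$, $\mathcal{T}^{(n)}(s_l|\sigma^{(n)})=v_{i_l}=\mathcal{T}_2^{(n)}(s_l|\sigma_2^{(n)})$, which is $\epsilon_n^{(2)}$-close to $\gamma_{s_l}$, where $\epsilon_n^{(i)}:=\sup_t|\mathcal{T}_i^{(n)}(t|\sigma_i^{(n)})-\gamma_t|\to 0$. Each $\tau_r$ coincides with some $s_l$, so $\sigma_1^{(n)}\subset\sigma^{(n)}$ as partitions, and hence on each subinterval $[s_{l-1},s_l]$ both $\mathcal{T}^{(n)}(\cdot|\sigma^{(n)})$ and $\mathcal{T}_1^{(n)}(\cdot|\sigma_1^{(n)})$ are affine (no interior breakpoints). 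The triangle inequality through $\gamma_{s_l}$ yields $|\mathcal{T}^{(n)}(s_l|\sigma^{(n)})-\mathcal{T}_1^{(n)}(s_l|\sigma_1^{(n)})|\leqslant\epsilon_n^{(1)}+\epsilon_n^{(2)}$ at every $s_l<1$. The terminal endpoint is handled via condition (2): on $[\tau_{m_1-1},1]$ the function $\mathcal{T}_1^{(n)}(\cdot|\sigma_1^{(n)})$ is constant equal to $u_{m_1-1}$, so $\gamma_t$ is $\epsilon_n^{(1)}$-close to $u_{m_1-1}$ there; since $u_{m_1-1}$ precedes $v_{i_k}$ in $\mathcal{T}^{(n)}$ we have $t_{i_k}\in[\tau_{m_1-1},1)$, whence $|v_{i_k}-u_{m_1-1}|\leqslant\epsilon_n^{(1)}+\epsilon_n^{(2)}$. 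Because the difference of affine functions on an interval attains its maximum at an endpoint, these endpoint estimates propagate to $\sup_{0\leqslant t\leqslant 1}|\mathcal{T}^{(n)}(t|\sigma^{(n)})-\mathcal{T}_1^{(n)}(t|\sigma_1^{(n)})|\leqslant\epsilon_n^{(1)}+\epsilon_n^{(2)}\to 0$, and a final triangle inequality with $\epsilon_n^{(1)}\to 0$ closes the argument.

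The main subtlety I anticipate is the terminal interval $[\tau_{m_1-1},1]$: the sub-P.L.T. relations place no direct constraint on the last vertex of $\mathcal{T}^{(n)}$ relative to $\mathcal{T}_1^{(n)}$, so condition (2), which collapses the last segment of $\mathcal{T}_1^{(n)}$ to a single point, is exactly what rescues the argument there. Otherwise the proof is essentially a pigeonhole alignment of parametrizations together with the trivial observation that two affine functions on an interval whose endpoint values are close must themselves stay uniformly close on that interval.
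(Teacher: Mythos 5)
Your proof takes essentially the same route as the paper's: parametrize $\mathcal{T}^{(n)}$ by pulling back the $\sigma_2^{(n)}$-times at the vertices of $\mathcal{T}^{(n)}$ (with the final time forced to $1$), align this with $\sigma_1^{(n)}$, and propagate the endpoint estimates using the observation that the difference of two affine maps on a subinterval is maximized at an endpoint, together with the last-point-doubling hypothesis~(2) to control the terminal interval. You make explicit the parametrization $\sigma^{(n)}$ that the paper only asserts to exist, which is a mild gain in rigor.

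However, you and the paper share the same gap, and it sits exactly where you flagged the "main subtlety". Your claim that each $\tau_r$ ($r<m_1$) coincides with some $s_l$, and hence that the partition points of $\sigma_1^{(n)}$ are among those of $\sigma^{(n)}$, fails whenever the chosen embeddings put $j_{m_1-1}=i_k$, i.e.\ whenever the last vertex of $(\mathcal{T}_1^{(n)})^-$ is identified with the \emph{last} vertex $v_{i_k}$ of $\mathcal{T}^{(n)}$. In that case $\tau_{m_1-1}=t_{i_k}$ lies strictly between $s_{k-1}=t_{i_{k-1}}$ and $s_k=1$, so $\mathcal{T}_1^{(n)}(\cdot\mid\sigma_1^{(n)})$ has an \emph{interior} breakpoint in the last $\sigma^{(n)}$-subinterval and the affine-comparison-at-endpoints step no longer applies there. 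The value of $\bigl|\mathcal{T}^{(n)}(\tau_{m_1-1}\mid\sigma^{(n)})-\mathcal{T}_1^{(n)}(\tau_{m_1-1}\mid\sigma_1^{(n)})\bigr|$ is a positive fraction of $|v_{i_{k-1}}-v_{i_k}|$, which is \emph{not} controlled by $\epsilon_n^{(1)}+\epsilon_n^{(2)}$. In fact the proposition as stated is false in this edge case: take $\mathcal{T}_1^{(n)}=(0,10,10)$ with $\sigma_1^{(n)}=(0,\tfrac12,1)$, $\mathcal{T}_2^{(n)}=(0,10,10,10)$ with $\sigma_2^{(n)}=(0,\tfrac12,\tfrac34,1)$, $\mathcal{T}^{(n)}=(0,10)$, and $\gamma$ equal to the common interpolated path ($\gamma_t=20t$ on $[0,\tfrac12]$, $\gamma_t=10$ afterwards). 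All hypotheses are met with $\epsilon_n^{(i)}\equiv0$, but $\mathcal{T}^{(n)}$ has only two points, so its only parametrization is $\{0,1\}$ and $\sup_t|\mathcal{T}^{(n)}(t\mid\sigma^{(n)})-\gamma_t|=5$. The paper's proof encounters exactly the same obstruction: its asserted parametrization $\sigma^{(n)}$ containing all partition points of $\sigma_1^{(n)}$ cannot exist when $|\mathcal{T}_1^{(n)}|>|\mathcal{T}^{(n)}|$. Both arguments implicitly rely on the additional structure present in the application (the last two distinct vertices of $\mathcal{T}_1^{(n)}$ are $O(\varepsilon_n)$ apart, so the missing term $|v_{i_{k-1}}-v_{i_k}|$ is small), but this fact is not among the stated hypotheses of the proposition. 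A clean fix is to add a hypothesis bounding the last nontrivial edge of $\mathcal{T}_1^{(n)}$, or to require $(\mathcal{T}_1^{(n)})^-\prec(\mathcal{T}^{(n)})^-$ rather than $(\mathcal{T}_1^{(n)})^-\prec\mathcal{T}^{(n)}$.
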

\begin{proof}
For any $\varepsilon>0,$ there exists $n_{0}>0,$ such that for any
$n>n_{0},$
\begin{equation}
\sup_{0\leqslant t\leqslant1}|\mathcal{T}_{i}^{(n)}(t|\sigma_{i}^{(n)})-\gamma_{t}|<\varepsilon,\ \ \ i=1,2.\label{gamma}
\end{equation}
On the other hand, it is obvious that we are able to construct a parametrization
$\sigma^{(n)}$ of $\mathcal{T}^{(n)},$ such that: 

(1) the partition points of $\sigma_{1}^{(n)}$ belong to the partition
points in $\sigma^{(n)},$ and for any $t<1$ in $\sigma_{1}^{(n)}$,
\[
\mathcal{T}_{1}^{(n)}(t|\sigma_{1}^{(n)})=\mathcal{T}^{(n)}(t|\sigma^{(n)});
\]

(2) the partition points in $\sigma^{(n)}$ belong to the partition
points in $\sigma_{2}^{(n)},$ and for any $t<1$ in $\sigma^{(n)},$
\[
\mathcal{T}^{(n)}(t|\sigma^{(n)})=\mathcal{T}_{2}^{(n)}(t|\sigma_{2}^{(n)}).
\]

Let $t_{n}$ be the largest time spot in $\sigma_{1}^{(n)}$ such
that $t_{n}<1$. Let $u_{n}<v_{n}$ be any consecutive time spots
in $\sigma^{(n)},$ then on $[u_{n},v_{n}]$ both $\mathcal{T}_{1}^{(n)}(\cdot|\sigma_{1}^{(n)})$
and $\mathcal{T}^{(n)}(\cdot|\sigma^{(n)})$ are linear. Therefore,
by an elementary result on the comparison for linear paths, we have
\begin{align}
 & \sup_{u_{n}\leqslant t\leqslant v_{n}}|\mathcal{T}_{1}^{(n)}(t|\sigma_{1}^{(n)})-\mathcal{T}^{(n)}(t|\sigma^{(n)})|\label{comparison}\\
\leqslant & \max\{|\mathcal{T}_{1}^{(n)}(u_{n}|\sigma_{1}^{(n)})-\mathcal{T}^{(n)}(u_{n}|\sigma^{(n)})|,\ |\mathcal{T}_{1}^{(n)}(v_{n}|\sigma_{1}^{(n)})-\mathcal{T}^{(n)}(v_{n}|\sigma^{(n)})|\}.\nonumber 
\end{align}

If $v_{n}\leqslant t_{n},$ then 
\[
\mathcal{T}^{(n)}(u_{n}|\sigma^{(n)})=\mathcal{T}_{2}^{(n)}(u_{n}|\sigma_{2}^{(n)}),\ \mathcal{T}^{(n)}(v_{n}|\sigma^{(n)})=\mathcal{T}_{2}^{(n)}(v_{n}|\sigma_{2}^{(n)}).
\]
It follows from (\ref{gamma}) and (\ref{comparison}) that
\[
\sup_{u_{n}\leqslant t\leqslant v_{n}}|\mathcal{T}_{1}^{(n)}(t|\sigma_{1}^{(n)})-\mathcal{T}^{(n)}(t|\sigma^{(n)})|<2\varepsilon,\ \ \ n>n_{0}.
\]

If $u_{n}\geqslant t_{n},$ since the last two points of $\mathcal{T}_{1}^{(n)}$
are identical (denoted by $x^{(n)}$), it follows that
\begin{align*}
 & \sup_{u_{n}\leqslant t\leqslant v_{n}}|\mathcal{T}_{1}^{(n)}(t|\sigma_{1}^{(n)})-\mathcal{T}^{(n)}(t|\sigma^{(n)})|\\
\leqslant & \max\{|x^{(n)}-\mathcal{T}^{(n)}(u_{n}|\sigma^{(n)}),\ |x^{(n)}-\mathcal{T}^{(n)}(v_{n}|\sigma^{(n)})||\}.
\end{align*}
Obviously 
\[
\mathcal{T}^{(n)}(u_{n}|\sigma^{(n)})=\mathcal{T}_{2}^{(n)}(u_{n}|\sigma_{2}^{(n)}).
\]
But it may not be true for $v_{n}$ since it is possible that $v_{n}=1.$
However, since $\mathcal{T}^{(n)}\prec\mathcal{T}_{2}^{(n)},$ there
exists some $w_{n}>u_{n},$ such that 
\[
\mathcal{T}^{(n)}(v_{n}|\sigma^{(n)})=\mathcal{T}_{2}^{(n)}(w_{n}|\sigma_{2}^{(n)}),\ \ \ (w_{n}=v_{n}\mbox{ if \ensuremath{v_{n}<1}}).
\]
Due to the fact that $\mathcal{T}_{1}^{(n)}\equiv x^{(n)}$ on $[t_{n},1],$
we arrive again at 
\[
\sup_{u_{n}\leqslant t\leqslant v_{n}}|\mathcal{T}_{1}^{(n)}(t|\sigma_{1}^{(n)})-\mathcal{T}^{(n)}(t|\sigma^{(n)})|<2\varepsilon,\ \ \ n>n_{0}.
\]

Consequently, 
\[
\sup_{0\leqslant t\leqslant1}|\mathcal{T}_{1}^{(n)}(t|\sigma_{1}^{(n)})-\mathcal{T}^{(n)}(t|\sigma^{(n)})|<2\varepsilon,\ \ \ n>n_{0}.
\]
It follows that 
\[
\lim_{n\rightarrow\infty}\sup_{0\leqslant t\leqslant1}|\mathcal{T}^{(n)}(t|\sigma^{(n)})-\gamma_{t}|=0,
\]
and in particular, $\mathcal{T}^{(n)}$ converges in trajectory to
$(\gamma_{t})_{0\leqslant t\leqslant1}.$

Now the proof is complete.
\end{proof}

In order to apply Proposition \ref{prop 3.3}, we are going to modify
$\widetilde{\mathcal{X}}^{\varepsilon}$ and choose a suitable parametrization
based on the one for $\widetilde{\mathcal{X}}^{\varepsilon}$ specified
in Subsection 3.1, which is chosen according to the successive visit
time of larger boxes for the diffusion paths (excluding revisit of
the same box before visiting other boxes), so that the assumptions
of Proposition \ref{prop 3.3} are all verified.

The method is the following. By using the notation in Section 3.1,
if $(\zeta_{k}^{\varepsilon},\tau_{l}^{\varepsilon},\zeta_{k+1}^{\varepsilon})$
is such that 
\[
\zeta_{k}^{\varepsilon}<\tau_{l}^{\varepsilon}<\zeta_{k+1}^{\varepsilon}\leqslant1,
\]
then we modify the linear path $\widetilde{X}^{\varepsilon}$ on $[\zeta_{k}^{\varepsilon},\zeta_{k+1}^{\varepsilon}]$
to a new path such that it does not move during $[\zeta_{k}^{\varepsilon},\tau_{l}^{\varepsilon}]$
and goes directly from its initial position at $t=\tau_{l}^{\varepsilon}$
to $\widetilde{X}_{\zeta_{k+1}^{\varepsilon}}^{\varepsilon}$ at $t=\zeta_{k+1}^{\varepsilon}$
with constant velocity. If $ $ 
\[
\zeta_{k}^{\varepsilon}<\tau_{l}^{\varepsilon}<1<\zeta_{k+1}^{\varepsilon},
\]
then we modify the linear path $\widetilde{X}^{\varepsilon}$ on $[\zeta_{k}^{\varepsilon},1]$
(in fact, $\widetilde{X}^{\varepsilon}$ remains still on $[\zeta_{k}^{\varepsilon},1]$)
to a path such that during $[\zeta_{k}^{\varepsilon},\tau_{l}^{\varepsilon}]$
and $[\tau_{l}^{\varepsilon},1]$ it remains still (equals $\widetilde{X}_{\zeta_{k}^{\varepsilon}}^{\varepsilon}$).
It seems that such modification is trivial and does not change anything,
but it does make a slight difference if we are using the associated
P.L.T.. Let $\widehat{X}^{\varepsilon}$ be the modified piecewise
linear path of $\widetilde{X}^{\varepsilon}$ and let $\widehat{\mathcal{X}}^{\varepsilon}$
be the associated P.L.T. of $\widehat{X}^{\varepsilon}.$ If we can
prove that $\widehat{X}^{\varepsilon_{n}}$ converges uniformly to
$(X_{t})_{0\leqslant t\leqslant1}$ with probability one, then all
the assumptions in Proposition \ref{prop 3.3} for the triple sequence
$\{(\mathcal{X}^{\varepsilon_{n}},\varepsilon_{n}\cdot\mathcal{Y}^{\varepsilon_{n}},\widehat{\mathcal{X}}^{\varepsilon_{n}})\}$
are verified, and we will complete the proof of Theorem \ref{thm 2.1}.
In fact, it is just a simple modification of the arguments in Subsection
3.1.
\begin{lem}
With probability one, $(\widehat{X}_{t}^{\varepsilon_{n}})_{0\leqslant t\leqslant1}$
converges uniformly to the diffusion paths $(X_{t})_{0\leqslant t\leqslant1}.$\end{lem}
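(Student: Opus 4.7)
The plan is to decompose
\[
\sup_{0 \leq t \leq 1} |\widehat{X}_t^{\varepsilon} - X_t| \leq \sup_{0 \leq t \leq 1} |\widehat{X}_t^{\varepsilon} - \widetilde{X}_t^{\varepsilon}| + \sup_{0 \leq t \leq 1} |\widetilde{X}_t^{\varepsilon} - X_t|.
\]
The second term already converges to $0$ almost surely along $\varepsilon_{n}=1/n$ by Proposition \ref{prop 3.1} in the $\mu'$-setting, which was noted immediately after the proof of Proposition \ref{prop 3.1}. Hence the only real content of the lemma is to show that $\sup_{t}|\widehat{X}_t^{\varepsilon_n} - \widetilde{X}_t^{\varepsilon_n}| \to 0$ almost surely.

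The next step is a purely pathwise observation. On every interval $[\zeta_k^{\varepsilon},\zeta_{k+1}^{\varepsilon}]$ both $\widehat{X}^{\varepsilon}$ and $\widetilde{X}^{\varepsilon}$ take values on the same line segment joining $\varepsilon \boldsymbol{n}_k^{\varepsilon}$ and $\varepsilon \boldsymbol{n}_{k+1}^{\varepsilon}$: $\widetilde{X}^{\varepsilon}$ traverses it at constant speed, while $\widehat{X}^{\varepsilon}$ rests at the starting endpoint on $[\zeta_k^{\varepsilon},\tau_l^{\varepsilon}]$ and then sweeps the whole segment at higher speed on $[\tau_l^{\varepsilon},\zeta_{k+1}^{\varepsilon}]$. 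Consequently,
\[
\sup_{t\in[\zeta_k^{\varepsilon},\zeta_{k+1}^{\varepsilon}]} |\widehat{X}_t^{\varepsilon} - \widetilde{X}_t^{\varepsilon}| \leq \varepsilon|\boldsymbol{n}_{k+1}^{\varepsilon} - \boldsymbol{n}_k^{\varepsilon}|,
\]
and the terminal interval $[\zeta_{M_V^{\varepsilon}}^{\varepsilon},1]$ is controlled in the same way (only the waiting case arises). Thus the problem reduces to
\[
\max_{0\leq k < M_V^{\varepsilon_n}} \varepsilon_n|\boldsymbol{n}_{k+1}^{\varepsilon_n} - \boldsymbol{n}_k^{\varepsilon_n}| \longrightarrow 0 \qquad \text{a.s. as } n \to \infty.
\]

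To obtain this, I plan to establish a single-step \emph{big-jump estimate}: for every fixed integer $L\geq 1$,
\[
P\bigl(|\boldsymbol{n}_{k+1}^{\varepsilon} - \boldsymbol{n}_k^{\varepsilon}| > L,\ \zeta_{k+1}^{\varepsilon}\leq 1\bigr) \leq K^L \varepsilon^{L(\mu' - \nu_0)}.
\]
Such an event forces the diffusion to leave $V_{\boldsymbol{n}_k^{\varepsilon}}^{\varepsilon}$, pass successively through at least $L-O(1)$ disjoint narrow tunnels, and never touch any other larger box before the next box-hitting time. Iterating the strong Markov property together with the Ben Arous--Kusuoka--Stroock Poisson-kernel bound used in Proposition \ref{prop 3.1} yields exactly the geometric decay above, in precise analogy with the $\sigma_L$-chain argument there (only with $\mu$ and $H_z^{\varepsilon}$ replaced throughout by $\mu'$ and $V_z^{\varepsilon}$). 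A union bound over $k$ up to the high-probability threshold $[\varepsilon^{-\gamma'}]$ for $M_V^{\varepsilon}$ (supplied by the $\mu'$-analogue of Lemma \ref{lem 3.1}) then gives
\[
P\Bigl(\max_{0\leq k<M_V^{\varepsilon}} |\boldsymbol{n}_{k+1}^{\varepsilon} - \boldsymbol{n}_k^{\varepsilon}| > L\Bigr) \leq C \varepsilon^{L(\mu'-\nu_0)-\gamma'} + P\bigl(M_V^{\varepsilon} > [\varepsilon^{-\gamma'}]\bigr),
\]
the last term being of the exponentially decaying form derived in the proof of Proposition \ref{prop 3.1}. Choosing the universal constant $L$ large enough to make the whole bound summable along $\varepsilon_n = 1/n$, Borel--Cantelli produces $\max_k |\boldsymbol{n}_{k+1}^{\varepsilon_n} - \boldsymbol{n}_k^{\varepsilon_n}| \leq L$ for all large $n$ almost surely, and hence $\sup_t|\widehat{X}_t^{\varepsilon_n} - \widetilde{X}_t^{\varepsilon_n}| \leq L\varepsilon_n \to 0$.

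The only delicate point is the translation of the big-jump event into a chain of tunnel traversals and the recycling of the $\sigma_L$-iteration in the larger-box setting; no probabilistic input beyond Proposition \ref{prop 3.1} and the Poisson-kernel bound of \cite{ben1984poisson} is required, consistent with the author's claim that this is a simple modification of the arguments in Subsection 3.1.
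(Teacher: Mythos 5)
Your argument is correct, but it takes a genuinely different route from the paper. The paper's proof does not introduce the intermediate comparison $\widehat{X}^{\varepsilon}$ vs.\ $\widetilde{X}^{\varepsilon}$; instead it estimates $P(\sup_{t}|\widehat{X}_{t}^{\varepsilon}-X_{t}|>\lambda\varepsilon)$ directly, by observing that on each interval $[\zeta_{j-1}^{\varepsilon},\zeta_{j}^{\varepsilon}]$ the modified path has the \emph{same image set} as $\widetilde{X}^{\varepsilon}$, so the event $\{\sup_{\zeta_{j-1}^{\varepsilon}\leqslant t\leqslant\zeta_{j}^{\varepsilon}}|\widehat{X}_{t}^{\varepsilon}-X_{t}|>\lambda\varepsilon,\ \zeta_{j}^{\varepsilon}\leqslant1\}$ is still contained in $\{\sigma_{0}<\cdots<\sigma_{\lambda/12}<\infty\}$, and the rest of the Proposition \ref{prop 3.1} estimate goes through verbatim with $H_{z}^{\varepsilon},\mu,\tau_{k}^{\varepsilon}$ replaced by $V_{z}^{\varepsilon},\mu',\zeta_{k}^{\varepsilon}$. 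Your decomposition $\sup_{t}|\widehat{X}_{t}-X_{t}|\leqslant\sup_{t}|\widehat{X}_{t}-\widetilde{X}_{t}|+\sup_{t}|\widetilde{X}_{t}-X_{t}|$ is cleaner conceptually: it reuses the $\mu'$-version of Proposition \ref{prop 3.1} as a black box and reduces the new content to the purely geometric observation that $\widehat{X}_{t}$ and $\widetilde{X}_{t}$ lie on a common segment of length $\varepsilon|\boldsymbol{n}_{k+1}^{\varepsilon}-\boldsymbol{n}_{k}^{\varepsilon}|$, plus a ``big-jump'' bound on $|\boldsymbol{n}_{k+1}^{\varepsilon}-\boldsymbol{n}_{k}^{\varepsilon}|$. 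The price is that you have to prove a new estimate, but — as you note — it is derived by the same $\sigma_{L}$-chain / Poisson-kernel iteration, so the probabilistic input is identical. Two small cautions: the exponent in your single-step bound should carry a universal factor like $L/12$ rather than $L$ (exactly as $\lambda/12$ appears in Proposition \ref{prop 3.1}), though this changes nothing since you only need some linear-in-$L$ exponent; and one should verify (which is true) that in each interval $(\zeta_{k}^{\varepsilon},\zeta_{k+1}^{\varepsilon})$ there is at most one $\tau_{l}^{\varepsilon}$, so the modification and your ``rest-then-sweep'' description of $\widehat{X}^{\varepsilon}$ are well-defined. With those points in place, your Borel--Cantelli step with a universal $L$ satisfying $L(\mu'-\nu_{0})/12>\gamma'+1$ closes the argument.
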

\begin{proof}
As in the proof of Proposition \ref{prop 3.1}, we need to estimate
$P(\sup_{0\leqslant t\leqslant1}|\widehat{X}_{t}^{\varepsilon}-X_{t}|>\lambda\varepsilon)$
for some universal constant $\lambda$, which reduces to the estimation
of $P(\sup_{\zeta_{j-1}^{\varepsilon}\leqslant t\leqslant\zeta_{j}^{\varepsilon}}|\widehat{X}_{t}^{\varepsilon}-X_{t}|>\lambda\varepsilon,\zeta_{j}^{\varepsilon}\leqslant1),$
$ $ $P(\sup_{\zeta_{l}^{\varepsilon}\leqslant t\leqslant1}|\widehat{X}_{t}^{\varepsilon}-X_{t}|>\lambda\varepsilon,\ M_{V}^{\varepsilon}=l)$
and $P(M_{V}^{\varepsilon}>k)$. 

For the first quantity, from the definition of $\widehat{X}^{\varepsilon}$
we have 
\[
\widehat{X}^{\varepsilon}([\zeta_{j-1}^{\varepsilon},\zeta_{j}^{\varepsilon}])=\widetilde{X}^{\varepsilon}([\zeta_{j-1}^{\varepsilon},\zeta_{j}^{\varepsilon}])
\]
on $\{\zeta_{j}^{\varepsilon}\leqslant1\},$ regardless of whether
the path has visited the smaller box $H_{\boldsymbol{n}_{j-1}^{\varepsilon}}^{\varepsilon}$
during $[\zeta_{j-1}^{\varepsilon},\zeta_{j}^{\varepsilon}]$. Therefore,
the event $\{\sup_{\zeta_{j-1}^{\varepsilon}\leqslant t\leqslant\zeta_{j}^{\varepsilon}}|\widehat{X}_{t}^{\varepsilon}-X_{t}|>\lambda\varepsilon,\zeta_{j}^{\varepsilon}\leqslant1\}$
again implies that during $[\zeta_{j-1}^{\varepsilon},\zeta_{j}^{\varepsilon}]$,
the path must have traveled through many narrow tunnels and spread
far away from the box $V_{\boldsymbol{n}_{j-1}^{\varepsilon}}^{\varepsilon}$
before visiting another box. More precisely, again we have
\[
\{\sup_{\zeta_{j-1}^{\varepsilon}\leqslant t\leqslant\zeta_{j}^{\varepsilon}}|\widehat{X}_{t}^{\varepsilon}-X_{t}|>\lambda\varepsilon,\zeta_{j}^{\varepsilon}\leqslant1\}\subset\{\sigma_{0}<\sigma_{1}<\cdots<\sigma_{\lambda/12}<\infty\},
\]
the same as in the proof of Proposition \ref{prop 3.1}. Similar arguments
apply to the estimation of the second quantity, and the third quantity
has nothing to do with the polygonal approximation. 

Therefore, we can apply exactly the same arguments as in the proof
of Proposition \ref{prop 3.1} to concluded that 
\[
\sum_{n=1}^{\infty}P(\sup_{0\leqslant t\leqslant1}|\widehat{X}_{t}^{\varepsilon}-X_{t}|>\lambda\varepsilon_{n})<\infty,
\]
where $\lambda$ is the universal constant chosen in that proof.
\end{proof}

Now the proof of Theorem \ref{thm 2.1} is complete. 

\begin{rem}
From the proof of Theorem \ref{thm 2.1}, it is not hard to see that the global assumption (C) on the generating vector fields can be weakened to a local one to some extend. In fact, the only property of the vector fields we've used from Assumption (C) is that at every point on the boundary of $H_z^{\varepsilon}$, the vector fields ${V_1,\cdots,V_d}$ do not generate a subspace of the tangent space at that point. Therefore, it suffices to assume that for each $z$ and $\varepsilon$, there exists a small rotation $O$ (an orthogonal transformation) such that after rotating the box $H_z^{\varepsilon}$ by $O$ with respect to its center, the vector fields do not generate a subspace of the tangent space at every point on the boundary. The smallness of the rotation $O$ can be quantified as follows. If we let 
\[
\widetilde{H}_z^{\varepsilon}=\varepsilon z+O(H_z^{\varepsilon}-\varepsilon z)
\] be the rotated box, then $O$ should satisfy the condition that for any $x=(x_1,\cdots,x_N)\in\widetilde{H}_z^{\varepsilon}$, 
\[
|x_i-\varepsilon z_i|<\frac{\varepsilon}{2},\ \forall i=1,\cdots,N.
\] 
This is to ensure that the geometric configuration, in particular the tunnel structure, is not damaged, so that the whole proof of Theorem \ref{thm 2.1} carries through in the same way.
\end{rem}

\section*{Acknowledgement}
The authors wish to thank the referees for their very careful reading on the manuscript and very useful suggestions. The authors are supported by the Oxford-Man Institute at University of Oxford.

\end{document}